\newtheorem{theorem}{Theorem}[section]
\newtheorem{corollary}[theorem]{Corollary}
\newtheorem{definition}[theorem]{Definition}
\newtheorem{lemma}[theorem]{Lemma}
\newtheorem{proposition}[theorem]{Proposition}
\newenvironment{proof}[1][Proof]{\noindent\textbf{#1.} }{\ \rule{0.5em}{0.5em}}
\begin{document}
\date{}

\title{Clifford-Wolf homogeneous left invariant $(\alpha,\beta)$-metrics on
compact semi-simple Lie groups
\thanks{Supported by NSFC (no. 11271216, 11271198, 11221091) and SRFDP of China}}
\author{Ming Xu$^1$ and Shaoqiang Deng$^2$ \thanks{Corresponding author. E-mail: dengsq@nankai.edu.cn}\\
\\
$^1$College of Mathematics\\
Tianjin Normal University\\
 Tianjin 300387, P.R. China\\
 \\
$^2$School of Mathematical Sciences and LPMC\\
Nankai University\\
Tianjin 300071, P.R. China}
\maketitle

\begin{abstract}
Let $(M,F)$ be a connected Finsler space. An isometry of $(M,F)$ is
called a Clifford-Wolf translation (or simply CW-translation) if
it moves all points the same distance. The compact Finsler space $(M,F)$ is called
restrictively Clifford-Wolf homogeneous (restrictively CW-homogeneous) if for any two
sufficiently close points $x_1,x_2\in M$, there exists a CW-translation $\sigma$ such that
$\sigma(x_1)=x_2$. In this paper, we define the good normalized datum
for a homogeneous non-Riemannian $(\alpha,\beta)$-space, and use it
to study  the restrictive CW-homogeneity of left invariant  $(\alpha,\beta)$-metrics on a compact connected
semisimple Lie group. We prove that a  left invariant restrictively CW-homogeneous $(\alpha,\beta)$-metric on a compact semisimple Lie group  must be of the Randers
type. This gives a complete classification of left invariant
$(\alpha,\beta)$-metrics on compact semi-simple Lie groups which are
restrictively Clifford-Wolf homogeneous.

\textbf{Mathematics Subject Classification (2010)}: 22E46, 53C30.

\textbf{Key words}:  Finsler spaces,  $(\alpha_1,\alpha_2)$-metrics, CW-homogeneity,
restrictive CW-homogeneity.

\end{abstract}

\section{Introduction}
The goal of this paper is to study left invariant restrictively Clifford-Wolf homogeneous (restrictively CW-homogeneous)
 $(\alpha,\beta)$-metrics on compact connected semi-simple Lie groups. Recall that an
isometry $\sigma$ of a metric space $(X,d)$ is called a Clifford-Wolf translation
(CW-translation) if the function $d(x,\sigma(x))$, $x\in X$, is a constant. A metric space is called CW-homogeneous if given any two points
$x_1,x_2\in M$, there is a CW-translation $\sigma$ such that $\sigma(x_1)=x_2$;
see \cite{BP99}.
There is a slightly weaker version of CW-homogeneity, called restrictive CW-homogeneity,
which only requires the existence of the CW-translation $\sigma$ with $\sigma(x_1)=x_2$
for sufficiently close $x_1$ and $x_2$; see Definition 2.2 below for the precise statement.
 Although a Finsler metric is not reversible in general, the
above definitions can be adapted to Finsler spaces by a word by word restatement.

The study of CW-translations has important merits in the investigations of the space
forms in Riemannian geometry; see Wolf's book \cite{WO10} for an excellent
survey. The related results have motivated a lot of mathematical activities; see
for example \cite{WO62,FR63,WO64,OZ69,OZ74,DMW86} for the determination of
CW-translations of explicit Riemannian manifolds; see also \cite{HE74,AW76} for the
applications of these results to the study of homogeneous Riemannian
manifolds of negative (non-positive) curvatures.

Recently, Berestovskii and Nikonorov studied the local one-parameter groups of
CW-translations of general Riemannian manifolds and established a
correspondence between local one-parameter groups of CW-translations and
Killing vector fields of constant length (KVFCLs); see
\cite{BN08-1,BN08-2,BN09}. The above research leads to a classification
of connected simply connected CW-homogeneous Riemannian manifolds.
The list consists of the Euclidean spaces, odd-dimensional spheres with
constant curvature, compact connected simply-connected Lie groups with
bi-invariant Riemannian metrics and Riemannian products of the above manifolds.
Notice that for simply-connected Riemannian manifolds, CW-homogeneity is
equivalent to restrictive CW-homogeneity.

More recently, we initiated the study of CW-translations of Finsler
spaces; see \cite{DX1,DX2}. The relation between local one-parameter group
of CW-translations and KVFCLs was generalized to the Finslerian case.
We classified CW-homogeneous left invariant
Randers metrics on compact simple Lie groups \cite{DX3} and all CW-homogeneous Randers metrics
on simply-connected manifolds \cite{XD1}. In this paper,
we will discuss a more generalized class of Finsler metrics, $(\alpha,\beta)$-metrics.
The main theorem is the following
\begin{theorem} \label{main-theorem}
Suppose $F=\alpha\phi(\beta/\alpha)$ is a left invariant restrictively CW-homogeneous
$(\alpha,\beta)$-metrics on a compact connected simple Lie group $G$, then
$F$ must be a Randers metric.
\end{theorem}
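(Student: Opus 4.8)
The plan is to reduce the global statement to an algebraic condition on the defining datum of $F$ at the identity and then to show that the requirement of having enough Clifford-Wolf translations forces the profile function $\phi$ to be affine. First I would use the fact that a non-Riemannian $(\alpha,\beta)$-metric determines its underlying Riemannian metric $\alpha$ and its one-form $\beta$ uniquely, so that left invariance of $F$ passes to $\alpha$ and $\beta$. Encoding these as an inner product $\langle\cdot,\cdot\rangle$ and a vector $\mathbf v$ on the Lie algebra $\mathfrak g$ of $G$, and putting them in the good normalized form introduced above, reduces the whole problem to data $(\langle\cdot,\cdot\rangle,\mathbf v,\phi)$ on $\mathfrak g=T_eG$.

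Next I would invoke the correspondence between local one-parameter groups of CW-translations and Killing vector fields of constant length (KVFCLs) established in \cite{DX1,DX2}: restrictive CW-homogeneity in the sense of Definition 2.2 is equivalent to the statement that the values at $e$ of the KVFCLs of $F$ span the whole of $\mathfrak g$. Since a Killing field of constant length $c$ either vanishes identically (when $c=0$) or is nowhere zero, the KVFCLs responsible for moving the base point have nontrivial translational part and can be analyzed through the right invariant fields $\widehat X$ generated by the left-translation action, with $\widehat X_e=X$; in the left trivialization $\widehat X_g$ corresponds to $\mathrm{Ad}(g^{-1})X$. Because any isometry of a non-Riemannian $F$ preserves $\alpha$ and $\beta$ separately, both $\alpha(\widehat X)$ and $\beta(\widehat X)$ are constant along the flow lines of $\widehat X$, so $\widehat X$ has constant $F$-length exactly when the Minkowski norm built from the datum is constant along the adjoint orbit $\mathrm{Ad}(G)X$.

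The heart of the argument is to turn this orbit-constancy into a pointwise condition and to exploit the nonlinearity of $\phi$. Writing $s=\beta(X)/\alpha(X)$ and differentiating along the orbit, I would compute the $\langle\cdot,\cdot\rangle$-gradient
\[
\nabla F(X)=\bigl(\phi(s)-s\phi'(s)\bigr)\frac{X}{\alpha(X)}+\phi'(s)\,\mathbf v ,
\]
so that $\widehat X$ is a KVFCL precisely when $\nabla F(X)$ is $\langle\cdot,\cdot\rangle$-orthogonal to the orbit tangent space $[\mathfrak g,X]$. For the collection of such $X$ to span $\mathfrak g$, this orthogonality must hold for directions realizing an open range of values of the scalar $s$. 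Comparing the conditions for different $s$ separates the coefficient $\phi(s)-s\phi'(s)$ of $X/\alpha(X)$ from the coefficient $\phi'(s)$ of $\mathbf v$ and forces both to be independent of $s$, which gives $\phi''\equiv 0$, i.e. $\phi(s)=c_1+c_2 s$ and hence $F=c_1\alpha+c_2\beta$, the Randers case.

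The main obstacle is this final separation step: one must show that the spanning requirement genuinely produces directions with a continuum of distinct values of $s$ and that the resulting constraints are incompatible with $\phi''\not\equiv0$. This is where the structure theory of compact simple Lie algebras enters. I would use the root space decomposition together with the abundance of regular elements, whose centralizers are Cartan subalgebras, and the interplay between $\mathbf v$ and the orbit directions $[\mathfrak g,X]$, to guarantee that the KVFCL directions cannot all be confined to the centralizer $\mathfrak z(\mathbf v)$ unless $\phi$ is affine, while simplicity of $\mathfrak g$ (hence trivial center) rules out the degenerate escape $\mathbf v=0$. A secondary technical point, controlled by the good normalized datum, is to handle the case in which $\alpha$ is not a priori bi-invariant: one replaces the centralizer $\mathfrak z(X)$ by $\ker(\mathrm{ad}_X^{*})$ with respect to $\langle\cdot,\cdot\rangle$ and checks that the same separation of coefficients persists.
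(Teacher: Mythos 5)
Your reduction to the Lie algebra and your gradient formula $\nabla F(X)=(\phi(s)-s\phi'(s))X/\alpha(X)+\phi'(s)v$ are correct (this is essentially equation (4.11) of the paper with $g=e$ and $X'=0$), but the argument has two genuine gaps. First, you only consider Killing fields coming from left translations, i.e.\ pairs $(X,0)$ in $\mathfrak{g}\oplus\mathfrak{g}'$. Restrictive CW-homogeneity only guarantees that every tangent vector $u$ extends to some KVFCL $(X,X')$ with $X-X'=u$ and $X'$ generating isometric \emph{right} translations, and the constant-length condition then reads $\langle[Y,\mathrm{Ad}(g)X],\nabla F(\mathrm{Ad}(g)X-X')\rangle=0$: the gradient is evaluated at the shifted point $\mathrm{Ad}(g)X-X'$, not on the adjoint orbit itself. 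Most of Section 4 of the paper (Theorem 4.1, Lemma 4.2, Theorem 4.4, Lemma 4.7) is devoted to pinning down this $X'$ (it must be a scalar multiple of the $\alpha$-dual $v$ of $\beta$), and the residual one-parameter freedom in $X'$ is exactly why the final answer is a Randers metric obtained by translating the indicatrix (a navigation argument) rather than an affine $\phi$.

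Second, the ``separation of coefficients'' step does not work, and its conclusion is in fact false. The orthogonality condition is the single scalar identity $(\phi(s)-s\phi'(s))\langle[Y,X],X\rangle/\alpha(X)+\phi'(s)\langle[Y,X],v\rangle=0$; as $X$ varies, both $s$ and the bracket terms $\langle[Y,X],X\rangle$, $\langle[Y,X],v\rangle$ change, so nothing forces the two coefficients to be separately constant in $s$. Worse, in the good normalized datum one has $\phi(0)=\phi(1)=1$, so $\phi''\equiv0$ would force $\phi\equiv1$, i.e.\ $F$ Riemannian; and a genuine non-Riemannian Randers metric written in its normalized datum has $\phi(s)=\sqrt{k_1+k_2s^2}+k_3s$ with $k_2\neq0$ in general (see the last lemma of Section 5). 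Since restrictively CW-homogeneous non-Riemannian left invariant Randers metrics on compact simple groups do exist, any derivation of $\phi''\equiv0$ must be erroneous. The paper instead uses the orthogonality relations to show that $\alpha^2=a\|\cdot\|^2_{\mathrm{bi}}+b\beta^2$ (Lemmas 4.8 and 4.9), proves that $\phi$ is real analytic (Lemma 4.6 --- a fact your sketch also implicitly needs but never establishes), and concludes by showing that the indicatrix translated by $\lambda v$ is $\mathrm{Ad}(G)$-invariant and hence an ellipsoid, which is precisely the Randers condition.
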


Combined with the classification theorem in \cite{XD1} (which is still correct with
CW-homogeneity changed to restrictive CW-homogeneity),  this theorem provides
a complete classification of restrictively CW-homogeneous left invariant
$(\alpha,\beta)$-metrics
on compact semi-simple Lie groups.
Using some similar arguments as in \cite{DX3} or \cite{XD1},
we can prove that a left invariant restrictively CW-homogeneous Finsler metric
on a compact semisimple Lie group is actually CW-homogeneous. Therefore,  Theorem \ref{main-theorem}
 also gives a complete classification of CW-homogeneous left invariant
$(\alpha,\beta)$-metrics on compact semisimple Lie groups.

Theorem \ref{main-theorem} is not  valid for a general
compact Lie group. For example, let $G=G'\times S^1$,
 where $G'$ is a compact semi-simple Lie group, $\alpha$ is a bi-invariant metric on $G$, and $\beta$ is a $\alpha$-parallel 1-form induced by the standard 1-form
on the $S^1$-factor, then for any smooth function satisfies the condition below, the  $(\alpha,\beta)$-metric $F=\alpha\phi(\beta/\alpha)$
is CW-homogeneous. Note that such a metric must be a Berwald metric. 

A very interesting and difficult problem is to classify all
the CW-homogeneous Finsler spaces. It seems much more difficult than the same problem
for Randers spaces.

In Section 2, we present some known results on related topics. In Section 3, we
define  the good normalized datum for a homogeneous non-Riemannian
$(\alpha,\beta)$-space $(M,F)$, and gives some method to find such datum. In Section 4, we use the good normalized datum
to study the space of KVFCLs of a left invariant restrictively CW-homogeneous
$(\alpha,\beta)$-metric on a compact connected simple Lie group and prove
Theorem \ref{main-theorem} for compact connected simple Lie groups. Finally, in Section 5, we prove Theorem \ref{main-theorem} for all compact connected semi-simple
Lie groups by mathematical induction.

\section{Preliminaries}

\subsection{The definition and examples of Finsler metrics}
A Minkowski norm on a $n$-dimensional real linear space $ V$ is a continuous function
$F:   V \rightarrow [0,+\infty)$ satisfying the following conditions:
\begin{enumerate}
\item (Positivity)\quad $F(y)$ is a positive smooth function on 
$ V\backslash 0$.
\item (Positive homogeneity)\quad $F(\lambda y)=\lambda F(y)$ for any $\lambda>0$.
\item (Strong convexity)\quad The Hessian matrix
\begin{equation}
(g_{ij}(y))=(\frac{1}{2}[F^2(y)]_{y^iy^j})
\end{equation}
is positive definite on $  V\backslash 0$.
\end{enumerate}

 The Minkowski norm $F$ is called Euclidean or a linear metric  if its Hessian matrix is independent of $y$, i.e.,
if $F^2=g_{ij}y^i y^j$ is defined by an inner product on $ V$.

Let $M$ be a connected smooth manifold. A Finsler metric on $M$ is a continuous
function $F: TM\rightarrow [0,+\infty)$ which is smooth on the slit tangent bundle
$TM\backslash 0$, such that its restriction to each tangent space is a Minkowski norm.

The pair $(M,F)$ is called a Finsler space or a Finsler manifold.
It is a Riemannian manifold if its restriction in each tangent space is an
Euclidean norm (a linear metric).

The most important examples of non-Riemannian Finsler metrics are  Randers metrics.
A Randers metric is a  Finsler metric of the form $F=\alpha+\beta$, where $\alpha$ is a Riemannian metric
and $\beta$ is a 1-form whose $\alpha$-length is less than $1$ everywhere. Randers metrics were introduced
by G. Randers in 1941, in his study of general relativity \cite{RA41}.

There are a more generalized class of Finsler metrics which have been studied extensively
in the literature. Let $\alpha$ be a Riemannian metric and $\beta$ a smooth 1-form on
the manifold $M$. An $(\alpha,\beta)$-metric is a Finsler metric of the form $F=\alpha\phi(\beta/\alpha)$, where
$\phi$ is a positive function on $\mathbb{R}$. The condition for $F$ to define a Finsler metric on $M$ can be stated as follows (see \cite{CS05}). Denote
$\epsilon_0=\sup_{(x,y)\in TM\backslash 0}\beta(x,y)/\alpha(x,y)$. If $\epsilon_0$ can be
attained at certain point $(x_0,y_0)$ and it is positive, then  $\phi$ is required to be smooth on
$\mathcal{I}=[-\epsilon_0,\epsilon_0]$ and
satisfies
\begin{equation}\label{f-2-1}
 \phi(s)-s\phi'(s)+(b^2-s^2)\phi''(s)>0,
\end{equation}
for all $b$ and $s$ such that $|s|\leq |b|\leq \epsilon_0$; If $\epsilon_0$
can not be attained at any point, then $\phi$ is required to be  positive
and smooth  on $\mathcal{I}=(-\epsilon_0,\epsilon_0)$ ($\mathcal{I}=\mathbb{R}$ when
$\epsilon_0=\infty$), and (\ref{f-2-1}) is satisfied for all $b$ and $s$ with
$|s|\leq |b|<\epsilon_0$. Notice that the Riemannian metric  $\alpha$ or the $1$-form  $\beta$ in the definition of a
$(\alpha,\beta)$-metric may not be unique.
When $\beta$ is identically $0$, the metric $F$ is Riemannian. If $\phi$ is a linear
function, then $F$ is a Randers metric.

\subsection{Homogeneous Finsler spaces}
On a Finsler space $(M,F)$ one can define the arc length of a piecewise smooth path.
Let $x,x'\in M$. Then the  distance  $d(x,x')$ is defined to be the supremum of the arc lengths of all piecewise smooth paths from $x$ to $x'$. Notice that in general we do not have the reversibility $d(x,x')\equiv d(x',x)$, unless $F$ is reversible, i.e.,
$F(x,y)=F(x,-y)$ for any $x\in M$ and $y\in T_x M$. An isometry $\varphi$ of $(M,F)$
is  a diffeomorphism of $M$ such that $\varphi^* F=F$.  Equivalently, an isometry is  a homoemorphism of $M$ onto itself such that
 $d(x,x')=d(\varphi(x),\varphi(x'))$ for any $x,x'\in M$ (see \cite{DH02}). It was proven by Deng and Hou that the group $I(M,F)$ of all isometries of $(M,F)$,
endowed with the open-compact topology, is a Lie group \cite{DH02}. The space $(M,F)$ is called a homogeneous Finsler space if $I(M,F)$ acts transitively on $M$. In this case
 the manifold $M$ can be written as  a coset space $G/H$, where $G$ is a closed subgroup of $I(M,F)$ which
acts transitively on $M$ and $H$ is the isotropy subgroup of $G$ at a point $x_0\in M$. In
general, there may be more than one way to write $M$ as $G/H$.  Since in this paper we will only consider connected manifolds, the subgroup $G$ can also be chosen to be a closed
connected subgroup of the connected isometry group $I_0(M,F)$.

Let us give some examples of homogeneous Finsler spaces.

Let $G$ be a  a Lie group.  A Finsler metric $F$ on $G$ is called left invariant if $L(G)\subset
I(G,F)$. Then $(G, F)$  is obviously homogeneous.

The second example is a homogeneous Randers metric $F=\alpha+\beta$ on $M=G/H$.
The uniqueness of the presentation of $F$ indicates that both $\alpha$ and $\beta$
are preserved under the action of $I(M,F)$. By a 1-to-1
correspondence, the metric $F$ is determined by the restrictions of
$\alpha$ and $\beta$ in $T_{x_0}M=\mathfrak{m}=\mathfrak{g}/\mathfrak{h}$,
i.e. an $\mathrm{Ad}(H)$-invariant linear metric on $\mathfrak{m}$ and an
$\mathrm{Ad}(H)$-invariant vector in $\mathfrak{m}^*$.

Now we consider a homogeneous $(\alpha,\beta)$-metric $F=\alpha\phi(\beta/\alpha)$
on $M=G/H$. In general
there exists more than one way to write $F$  as an $(\alpha,\beta)$-metric, hence  $\alpha$ and $\beta$ may not be $G$-invariant, or equivalently,  their restrictions in
$\mathfrak{m}$ may not be $\mathrm{Ad}(H)$-invariant. To tackle this problem, we introduce the notion of a good datum. A triple $(\phi,\alpha,\beta)$ is called
a good datum of the homogeneous non-Riemannian $(\alpha,\beta)$-metric $F=\alpha\phi(\beta/\alpha)$ if both $\alpha$ and $\beta$ are invariant under the action of $I_0(M,F)$.
The following properties of a good datum are easy to verify:
\begin{enumerate}
\item For any closed connected subgroup $G\subset I_0(M,F)$ which acts transitively on
$M$, the restrictions of $\alpha$ and $\beta$ in $\mathfrak{m}$ are $\mathrm{Ad}(H)$
invariant, where $H$ is isotropy subgroup of $G$ at $x_0\in M$.
\item The isometry group of $(M,F)$ can be identified with  the closed subgroup
of $I(M,\alpha)$ which keeps $\beta$ invariant. A vector field $X$ is a Killing vector field of
$(M,F)$ if and only if $X$ is a  Killing vector field of $(M,\alpha)$ and
$L_X \beta=0$.
\end{enumerate}

We will show how to find a good datum for a homogeneous $(\alpha,\beta)$-metric
in the next section.

\subsection{CW-translations and CW-homogeneity of Finsler spaces}

Although the distance function of a Finsler space is generally not symmetric, the notions of CW-translations and CW-homogeneity of Finsler spaces can be
defined in the same way as for metric spaces. For the completeness of the article
we briefly recall the definitions below.

\begin{definition}
An isometry $\rho$ of a Finsler space $(M,F)$ is called a Clifford-Wolf translation (CW-translation)
if $d(x,\rho(x))$ is a constant function for $x\in M$.
\end{definition}

\begin{definition}
A Finsler space $(M,F)$ is called Clifford-Wolf homogeneous (CW-homogeneous)
if for any pair of points $x,x'\in M$, there is a CW-translation $\rho$ which
sends $x$ to $x'$. It is called restrictively CW-homogeneous, if for any $x$,
there is a neighborhood ${U}$ of $x$, such that for any pair of points $x_1$ and $x_2$ in ${U}$,
there is a CW-translation $\rho$ of $(M,F)$, such that $\rho(x_1)=x_2$.
\end{definition}

The main tool to study CW-translations and CW-homogeneity in Finsler geometry is
a natural interrelation between Killing vector fields of constant length
(KVFCLs) and local one-parameter semigroups of CW-translations. We
now recall the main results in \cite{DX1}.

\begin{theorem}
Suppose $(M,F)$ is a complete Finsler manifold with positive injective radius. If
$X$ is a KVFCL of $(M,F)$ and $\varphi_t$ is the flow generated by $X$, then
$\varphi_t$ is a Clifford-Wolf translation for any sufficiently small $t>0$.
\end{theorem}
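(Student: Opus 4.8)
The plan is to reduce the constancy of the displacement function $x\mapsto d(x,\varphi_t(x))$ to the statement that the orbits of $X$ are minimizing geodesics for small parameter, which is exactly where the two hypotheses (constant length and positive injectivity radius) enter. Write $c=F(X)$ for the common length. If $c=0$ then $X\equiv 0$ and $\varphi_t=\mathrm{id}$ is trivially a CW-translation, so assume $c>0$. Since $M$ is complete and $X$ is a Killing field, the flow $\varphi_t$ is a complete one-parameter group of isometries. For each $x$ the orbit segment $s\mapsto\varphi_s(x)$, $s\in[0,t]$, joins $x$ to $\varphi_t(x)$ and has Finsler length $\int_0^t F(X(\varphi_s(x)))\,ds=ct$; hence $d(x,\varphi_t(x))\le ct$ for every $x$ and every $t>0$, which is the easy upper bound.

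The heart of the argument is to upgrade these orbit segments to geodesics. First I would fix a point $p$ and let $\sigma$ be the geodesic with $\sigma(0)=p$ and $\dot\sigma(0)=X(p)$. The Finslerian conservation law for a Killing field along a geodesic gives that $g_{\dot\sigma(s)}(\dot\sigma(s),X(\sigma(s)))$ is constant in $s$, so it equals its value $g_{X(p)}(X(p),X(p))=F(X(p))^2=c^2$ at $s=0$. On the other hand a geodesic has constant speed $F(\dot\sigma(s))=c$, while $F(X(\sigma(s)))=c$ by the constant-length hypothesis, so the fundamental inequality $g_y(y,w)\le F(y)F(w)$ is saturated all along $\sigma$. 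The equality case of this inequality, a consequence of the strong convexity of $F$, forces $X(\sigma(s))$ to be a nonnegative multiple of $\dot\sigma(s)$, and matching the two lengths yields $X(\sigma(s))=\dot\sigma(s)$. Thus $\sigma$ is simultaneously a geodesic and an integral curve of $X$; by uniqueness of integral curves it coincides with the orbit of $X$ through $p$, so every orbit of $X$ is a geodesic.

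With this lemma in hand the conclusion is immediate. Let $i_0>0$ be the injectivity radius of $(M,F)$ and take $0<t<i_0/c$. For such $t$ the orbit segment from $x$ to $\varphi_t(x)$ is a geodesic of length $ct<i_0$, hence a minimizing geodesic, and therefore $d(x,\varphi_t(x))=ct$. Since the right-hand side is independent of $x$, the isometry $\varphi_t$ moves every point the same distance $ct$, so it is a CW-translation.

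I expect the main obstacle to lie in the geodesic lemma of the middle paragraph, since Finsler geometry offers no Levi-Civita connection with which to imitate the Riemannian computation $\nabla_X X=0$. The two ingredients that replace it, namely the conservation of $g_{\dot\sigma}(\dot\sigma,X)$ along geodesics under a Killing field and the equality discussion in the fundamental inequality, must be set up carefully using the Chern connection and the strong convexity of $F$; once they are available (as in \cite{DX1}), the role of the positive injectivity radius is only to make the threshold $t<i_0/c$ uniform over all $x$, thereby turning the pointwise identity $d(x,\varphi_t(x))=ct$ into global constancy.
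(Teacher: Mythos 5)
Your argument is correct and is essentially the standard one: the paper itself states this theorem without proof, recalling it from \cite{DX1}, and the proof given there follows the same route --- the conservation of $g_{\dot\sigma}(\dot\sigma,X)$ along geodesics plus the equality case of the fundamental inequality to show that integral curves of a KVFCL are geodesics, and then the positive injectivity radius to make short orbit segments uniformly minimizing, so that $d(x,\varphi_t(x))=ct$ for all $x$. No gaps; the two ingredients you flag as needing care are exactly the lemmas established in \cite{DX1}.
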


\begin{theorem}\label{thm-2-4}
Let $(M,F)$ be a compact Finsler space. Then there is a $\delta>0$, such that
any CW-translation $\rho$ with $d(x,\rho(x))<\delta$ is contained in a local
one-parameter semigroup of CW-translations generated by a KVFCL.
\end{theorem}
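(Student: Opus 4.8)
The plan is to prove the stated converse direction: a CW-translation with sufficiently small displacement is realized as the time-$1$ map of a one-parameter subgroup of the isometry group whose generator is a KVFCL. Together with the preceding theorem (which guarantees that the flow of a KVFCL consists of CW-translations for small positive times), this yields the assertion. First I would fix the setting. Since $M$ is compact, $G=I(M,F)$ is a compact Lie group by the theorem of Deng and Hou, and $M$ has positive injectivity radius $i_0$. Using the compactness of $M$ one gets a uniform control: for $\delta<i_0$ small enough, any CW-translation $\rho$ with constant displacement $c=d(x,\rho(x))<\delta$ is $C^0$-close to the identity, hence lies in a fixed normal neighborhood of $e\in G$ on which $\exp_G$ is a diffeomorphism. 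Thus $\rho=\exp_G(X)$ for a unique Killing field $X$ of small pointwise norm, and the subgroup $\varphi_t=\exp_G(tX)$ consists of genuine isometries with $\varphi_1=\rho$. It then remains only to prove that $F(X)$ is constant; granting this, the preceding theorem makes each $\varphi_t$ ($t\ge 0$ small) a CW-translation and exhibits $\rho=\varphi_1$ inside the resulting local semigroup.

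The core is therefore the constancy of $F(X)$, and here I would argue geometrically. For each $x$ let $\gamma_x:[0,c]\to M$ be the unique unit-speed minimizing geodesic from $x$ to $\rho(x)$, which exists and depends smoothly on $x$ because $c<i_0$. As $\rho$ is an isometry, $\rho\circ\gamma_x=\gamma_{\rho(x)}$. I would then apply the Finsler first-variation formula to the function $u\mapsto d(x(u),\rho(x(u)))$ along an arbitrary variation $x(u)$ of $x$: since this function is identically $c$ by the CW condition, its derivative vanishes for every variation field, and after using that $\rho$ preserves the fundamental tensor this forces the endpoint velocities to match, $\rho_*\dot\gamma_x(0)=\dot\gamma_x(c)$, by injectivity of the Legendre transform on the indicatrix. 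Consequently $\gamma_{\rho(x)}$ continues $\gamma_x$ with matching position and velocity at $\rho(x)$, so the concatenations $\gamma_x*\gamma_{\rho(x)}*\cdots$ are genuine geodesics. This both shows that $\rho^n$ is a CW-translation of displacement $nc$ (while $nc<i_0$) and that the field $X_0(x):=\dot\gamma_x(0)$ has orbits equal to these geodesic lines.

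Finally I would identify $X_0$ with the Lie-theoretic generator $X$. The velocity-matching shows that the minimizing geodesics assemble into the orbits of a flow whose time-$1$ map is $\rho=\varphi_1$; by the uniqueness of one-parameter subgroups through $\rho$ near $e$, this geometric flow coincides with $\varphi_t$, so $\varphi_t(x)=\gamma_x(ct)$ and $F(X_x)=c\,F(\dot\gamma_x(0))\equiv c$. Hence $X$ is a Killing field of constant length $c$, which completes the argument.

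I expect the main obstacle to be the middle step: converting the pointwise, infinitesimal CW-condition into the rigid matching $\rho_*\dot\gamma_x(0)=\dot\gamma_x(c)$ and then globally assembling the minimizing geodesics into honest orbits of a flow, all while respecting the non-reversibility of $F$. In particular the first-variation computation and the selection of $\delta$ must be carried out with the forward distance and with a uniform, compactness-based lower bound on the strong convexity of the indicatrix, so that smoothness of $x\mapsto\gamma_x$ and the Legendre-transform injectivity hold uniformly. A secondary delicate point is the identification of the geometric geodesic flow with the algebraic subgroup $\exp_G(tX)$, since a priori the CW-translation $\rho$ determines only the time-$1$ map rather than the whole flow.
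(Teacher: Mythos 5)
The paper does not actually prove this statement: Theorem \ref{thm-2-4} is recalled verbatim from \cite{DX1}, so there is no in-text proof to compare against, and your proposal has to be judged against the argument in that reference. Your skeleton is the right one and matches the known approach: $I(M,F)$ is a compact Lie group in the compact-open (= uniform) topology, so small displacement forces $\rho=\exp_G(X)$ for a unique small Killing field $X$; the first-variation computation applied to the constant function $u\mapsto d(x(u),\rho(x(u)))$, together with invariance of the fundamental tensor and injectivity of the Legendre transform, correctly yields $\rho_*\dot\gamma_x(0)=\dot\gamma_x(c)$; and the resulting concatenation of minimizing geodesics into a foliation translated by $\rho$ is also correct.

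The genuine gap is exactly the step you label ``secondary'': the identification of the geometric flow $\psi_t(x)=\gamma_x(ct)$ with the algebraic one-parameter group $\varphi_t=\exp_G(tX)$. You invoke ``uniqueness of one-parameter subgroups through $\rho$ near $e$,'' but that uniqueness statement lives inside the Lie group $G=I(M,F)$, and you have not shown that $\psi_t$ is a path in $G$ at all --- i.e.\ that each $\psi_t$ is an isometry, equivalently that the unit field $X_0(x)=\dot\gamma_x(0)$ is Killing. A priori all you know is that $\varphi_t$ and $\psi_t$ share the time-one map (and hence agree at integer times), which is far from forcing them to coincide; and without the identification you cannot conclude $F(X)\equiv c$, since constancy of $F(X_0)$ says nothing about the Killing field $X$. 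This is not a secondary point but the actual content of the theorem: everything before it is routine. Closing it requires a further argument --- for instance showing that the canonical roots $\exp_G(X/n)$ are themselves CW-translations of displacement $c/n$ (so that $F(X(x))=\lim_{t\to0^+}d(x,\varphi_t(x))/t=c$ pointwise), or some other mechanism forcing the integral curves of $X$ onto the geodesic foliation --- and this is where the proof in \cite{DX1} (following Berestovskii--Nikonorov in the Riemannian case) does its real work. As written, your argument establishes that $\rho$ is the time-$c$ map of the flow of a unit vector field with geodesic orbits, but not that $\rho$ lies on a one-parameter group generated by a Killing vector field of constant length.
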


Notice that Theorem \ref{thm-2-4} is still correct if we replace the compactness of $M$
by the homogeneity of $(M,F)$; see \cite{XD1}.

Based on these interrelation theorems, we have an equivalent description of the
restrictive CW-homogeneity.

\begin{proposition}\label{pro-2-5}
Let $(M,F)$ be a compact connected homogeneous Finsler space. Then it is restrictively
CW-homogeneous if and only if any tangent vector can be extended to a KVFCL of $(M,F)$.
\end{proposition}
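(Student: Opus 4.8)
The plan is to prove the two implications separately, using Theorems 2.3 and \ref{thm-2-4} as the bridge between KVFCLs and CW-translations, and supplementing them with the geometric properties of KVFCLs established in \cite{DX1}: namely that the integral curves of a KVFCL $X$ are geodesics and that, for a small parameter, these flow lines realize the displacement distance, so that $d(p,\varphi_t(p))=t\,|X|$ where $\varphi_t$ denotes the flow of $X$. Since $M$ is compact it is complete with positive injectivity radius, so Theorem 2.3 is applicable; and Theorem \ref{thm-2-4} furnishes the threshold $\delta>0$ below which any CW-translation lies in a one-parameter semigroup generated by a KVFCL. Throughout I would work inside normal (convex) neighborhoods, where minimizing geodesics between close points exist and are unique.

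For the ``if'' direction, suppose every tangent vector extends to a KVFCL. Fix $x$ and choose a neighborhood $U$ of $x$ lying in a convex neighborhood. Given $x_1,x_2\in U$, I would take the unit initial velocity $v\in T_{x_1}M$ of the minimizing geodesic from $x_1$ to $x_2$ and extend it to a KVFCL $X$ with $X(x_1)=v$. Because integral curves of KVFCLs are geodesics, the flow line $t\mapsto\varphi_t(x_1)$ is the geodesic with initial velocity $v$, hence coincides with the minimizing geodesic from $x_1$ to $x_2$; thus $\varphi_{t_0}(x_1)=x_2$ with $t_0=d(x_1,x_2)$. By Theorem 2.3, $\varphi_{t_0}$ is a CW-translation once $t_0$ is small enough, which I would arrange by shrinking $U$, and then $\rho=\varphi_{t_0}$ is the required CW-translation.

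For the ``only if'' direction, suppose $(M,F)$ is restrictively CW-homogeneous and fix a unit vector $v\in T_xM$. Let $\gamma$ be the geodesic with $\gamma(0)=x$ and $\dot\gamma(0)=v$. For small $s>0$ the point $\gamma(s)$ lies in the CW-homogeneity neighborhood of $x$ at distance $s<\delta$, so there is a CW-translation $\rho_s$ with $\rho_s(x)=\gamma(s)$ and constant displacement $s$. By Theorem \ref{thm-2-4}, $\rho_s=\varphi_{t_s}$ for the flow of some KVFCL $X^{(s)}$. The flow line from $x$ to $\gamma(s)$ has constant speed $|X^{(s)}|$, hence length $t_s\,|X^{(s)}|$, which equals the displacement $s=d(x,\gamma(s))$; so this flow line is a minimizing geodesic and, by local uniqueness, must coincide with $\gamma|_{[0,s]}$. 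Therefore $X^{(s)}(x)=|X^{(s)}|\,v$, and since a constant multiple of a KVFCL is again a KVFCL, the rescaled field $X=X^{(s)}/|X^{(s)}|$ is a KVFCL with $X(x)=v$.

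I expect the main obstacle to be two uniformity issues together with the geodesic-coincidence step. In the ``if'' direction the time $t_0$, and with it the extending field $X$, vary with $(x_1,x_2)$, so I must ensure that the ``sufficiently small $t$'' of Theorem 2.3 can be chosen uniformly; this should follow from the compactness of $M$ and of the set of unit-length KVFCLs inside the finite-dimensional Lie algebra of Killing fields, which forces a positive lower bound on the parameter interval on which a flow stays a CW-translation. In the ``only if'' direction the delicate point is the identification $X^{(s)}(x)=|X^{(s)}|\,v$, which relies on short flow lines of KVFCLs being minimizing geodesics and on local uniqueness of minimizing geodesics; both hold once $s$ is below the convexity radius, which is uniformly positive by compactness and homogeneity.
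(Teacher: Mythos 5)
Your proof is correct and follows exactly the route the paper intends: the paper states Proposition \ref{pro-2-5} without proof as a direct consequence of Theorems 2.3 and \ref{thm-2-4} together with the facts from \cite{DX1} that integral curves of a KVFCL are geodesics and that short flow segments realize the displacement $d(p,\varphi_t(p))=t\,F(X)$, which are precisely the ingredients you use. Your attention to the uniformity of the ``sufficiently small $t$'' (via compactness of the unit-length KVFCLs in the finite-dimensional Lie algebra of Killing fields, or equivalently the quantitative form of Theorem 2.3 in terms of the injectivity radius) is the right way to close the one point the paper leaves implicit.
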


\section{Good normalized data of a homogeneous non-Riemannian
$(\alpha,\beta)$-space}

\subsection{Non-Riemannian $(\alpha,\beta)$-norms,  linear isometry groups and normalized data}
Before discussing homogeneous $(\alpha,\beta)$-spaces, let us  look at its local model.


Let $F$ be a Minkowski norm on a real linear space $ V$. Denote by $L( V,F)$ the group of linear isometries of $( V,F)$ (which is a compact Lie group),   and by $L_0( V,F)$
the unity component of $L( V, F)$. Then we have

\begin{lemma}\label{lem-3-1}
(1)\quad  Suppose  $\dim  V=n>1$. Then the Minkowski norm $F$  is an $(\alpha,\beta)$-norm if and only if there is a linear metric $\alpha$ and an $\alpha$-orthogonal decomposition
$ V=  V_1\oplus  V_2$, with $\dim  V_1=n-1$, such that $L_0(  V,F)$ contains
$\mathrm{SO}(  V_1,\alpha)$,
the maximal connected subgroup of linear isomorphisms
which preserve $\alpha$ and act trivially on $  V_2$.

(2)\quad  An $(\alpha,\beta)$-norm $F$ is Riemannian if and only if $\dim L_0(  V,F) >\dim \mathrm{SO}(  V_1,\alpha)$.
\end{lemma}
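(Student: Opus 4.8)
The plan is to treat the two directions of part (1) separately and then read off part (2) from the representation theory of the resulting symmetry group. Throughout I work with $n\geq 3$; the case $n=2$, where $\mathrm{SO}(V_1,\alpha)$ is trivial and the relevant symmetry of an $(\alpha,\beta)$-norm is a reflection rather than a rotation, needs a separate direct check. For the forward implication of (1), suppose $F=\alpha\phi(\beta/\alpha)$. If $\beta=0$ the norm is Riemannian and I may take $V_1$ to be an arbitrary hyperplane, so assume $\beta\neq 0$ and set $V_1=\ker\beta$, $V_2=V_1^{\perp_\alpha}$, whence $\dim V_1=n-1$. Writing $\beta(y)=\langle b,y\rangle_\alpha$ with $b\in V_2$ (by Riesz representation), I would verify that every $g\in\mathrm{SO}(V_1,\alpha)$, extended to fix $V_2$ pointwise, preserves both $\alpha$ and $\beta$: it preserves $\alpha$ because it is $\alpha$-orthogonal, and it preserves $\beta$ because $\beta(gy)=\langle b,gy\rangle_\alpha=\langle g^{-1}b,y\rangle_\alpha=\langle b,y\rangle_\alpha$, as $g$ fixes $b\in V_2$. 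Hence $g$ preserves $F$, and by connectedness $\mathrm{SO}(V_1,\alpha)\subseteq L_0(V,F)$.

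For the converse of (1), I would exploit that for $n\geq 3$ the group $\mathrm{SO}(V_1,\alpha)$ acts transitively on each $\alpha$-sphere of $V_1$ and trivially on $V_2$. Fixing a unit vector $e_2\in V_2$ and writing $y=y_1+te_2$ with $\rho=|y_1|_\alpha$, the $\mathrm{SO}(V_1,\alpha)$-orbit of $y$ is exactly $\{y_1'+te_2:|y_1'|_\alpha=\rho\}$, so invariance of $F$ forces $F(y)=h(\rho,t)$ for some function $h$. Setting $\beta(y)=\langle e_2,y\rangle_\alpha=t$ and $s=\beta/\alpha=t/\sqrt{\rho^2+t^2}$, positive homogeneity yields $F(y)=\alpha(y)\,\phi(s)$ with $\phi(s)=h(\sqrt{1-s^2},s)$; the required smoothness and strong convexity of $\phi$ are inherited from $F$ being a Minkowski norm. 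Thus $F$ is an $(\alpha,\beta)$-norm.

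For part (2), the forward direction is immediate: if $F$ is Riemannian then $L_0(V,F)=\mathrm{SO}(V,\alpha)$ has dimension $\binom{n}{2}>\binom{n-1}{2}=\dim\mathrm{SO}(V_1,\alpha)$. The substantive direction is the contrapositive, namely that a non-Riemannian $(\alpha,\beta)$-norm satisfies $\dim L_0(V,F)=\dim\mathrm{SO}(V_1,\alpha)$. Since $L_0(V,F)$ is compact and connected it preserves some inner product $\alpha'$; invariance under $\mathrm{SO}(V_1,\alpha)\subseteq L_0(V,F)$, together with an averaging argument and the irreducibility of the standard representation on $V_1$, forces $V_1\perp_{\alpha'}V_2$ and $\alpha'|_{V_1}\propto\alpha|_{V_1}$. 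Consequently $\mathrm{SO}(V_1,\alpha)$ sits inside $\mathrm{SO}(V,\alpha')\cong\mathrm{SO}(n)$ as the standard stabilizer of the line $V_2$, and $L_0(V,F)\subseteq\mathrm{SO}(V,\alpha')$ by connectedness.

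The main obstacle is the final step, which rests on the classical fact that $\mathrm{SO}(n-1)$ is a maximal connected subgroup of $\mathrm{SO}(n)$: this forces $L_0(V,F)$ to be either $\mathrm{SO}(V_1,\alpha)$ or all of $\mathrm{SO}(V,\alpha')$. In the latter case $F$ would be invariant under a group acting transitively on the $\alpha'$-unit sphere, hence a function of $\alpha'$-length alone and therefore Riemannian, contradicting our hypothesis. Therefore $L_0(V,F)=\mathrm{SO}(V_1,\alpha)$ and its dimension is exactly $\binom{n-1}{2}$, proving (2). I expect the delicate points to be the justification of the maximality of $\mathrm{SO}(n-1)$ in $\mathrm{SO}(n)$ (which I would either cite from the structure theory of compact Lie groups or verify at the level of the maximal subalgebra $\mathfrak{so}(n-1)\subset\mathfrak{so}(n)$) and the careful treatment of the low-dimensional cases $n=2$ and $n=3$.
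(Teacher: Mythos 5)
Your proposal is correct and agrees with the paper on part (1): both arguments take $V_1=\ker\beta$ with its $\alpha$-orthogonal complement for the forward direction, and for the converse both use that $\alpha$ and $\beta$ separate the $\mathrm{SO}(V_1,\alpha)$-orbits so that $F$ descends to a function $\phi$ of $s=\beta/\alpha$ (you are more explicit about transitivity on spheres; both you and the paper gloss over smoothness of $\phi$ at $s=\pm1$, and both really need $\dim V_1\geq 2$, so your flagged exclusion of $n=2$ matches the paper's own restriction to $\dim V>2$ right after the lemma). For part (2), however, you take a genuinely different route. The paper argues infinitesimally: from $\dim L_0(V,F)>\dim\mathrm{SO}(V_1,\alpha)$ it extracts a generator $X$ with $X(v_1)=v_2$, $X(v_2)=-v_1$ for nonzero $v_1\in V_1$, $v_2\in V_2$, deduces that $F$ restricted to the $2$-plane $W=\mathrm{span}(v_1,v_2)$ is rotation-invariant and hence Euclidean, and then asserts $F=\sqrt{a\alpha^2+b\beta^2}$ --- a final step that still tacitly uses the $\mathrm{SO}(V_1,\alpha)$-action to propagate Euclideanness from the single plane $W$ to all of $V$. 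Your argument is global: average to obtain an $L_0(V,F)$-invariant inner product $\alpha'$, use Schur's lemma to get $V_1\perp_{\alpha'}V_2$ and $\alpha'|_{V_1}\propto\alpha|_{V_1}$, and then invoke maximality of the connected subgroup $\mathrm{SO}(n-1)\subset\mathrm{SO}(n)$ (valid for $n\geq 3$ since $\mathfrak{so}(n)/\mathfrak{so}(n-1)$ is the irreducible standard representation of $\mathfrak{so}(n-1)$) to force the dichotomy $L_0(V,F)=\mathrm{SO}(V_1,\alpha)$ or $L_0(V,F)=\mathrm{SO}(V,\alpha')$, the latter making $F$ a function of $\alpha'$-length alone and hence Euclidean. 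Your route buys a clean structural dichotomy and closes the gap the paper leaves in its last step, at the price of importing the maximal-subgroup fact and the averaging construction; the paper's route is more elementary and produces the explicit formula $\sqrt{a\alpha^2+b\beta^2}$ directly. Both establish the lemma in the range $\dim V>2$ where it is actually applied.
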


\begin{proof}(1)\quad
Suppose $F=\alpha\phi(\beta/\alpha)$ is an $(\alpha,\beta)$-norm. If $\beta=0$ we can
choose any $\alpha$-orthogonal
decomposition as indicated in the lemma. Then we have
\begin{equation}
\mathrm{SO}(  V_1,\alpha)\subset \mathrm{SO}(  V,\alpha)=L_0( V,F).
\end{equation}
If $\beta\neq 0$, we can take $ V_1=\mathrm{ker}\beta$ and $ V_2$ to
be the $\alpha$-orthogonal complement of $  V_1$. The functions $\alpha$ and $\beta$
take the same value on each $\mathrm{SO}(  V_1,\alpha)$-orbit, so does $F$. Thus
$L_0(  V,F)$ contains the connected subgroup $\mathrm{SO}(  V_1,\alpha)$.

Conversely, assume that we can find $\alpha$ and an $\alpha$-orthogonal decomposition
$  V=  V_1\oplus  V_2$ with $\dim  V_1=n-1$, such that
$\mathrm{SO}(  V_1,\alpha)\subset L_0(  V,F)$. Then we can choose a
nonzero $\beta\in  V^*$ such that $\mathrm{ker}\beta=  V_1$. If
$y_1,y_2\in  V$ and  $\alpha (y_1)=\beta (y_2)$, then $y_1$ and  $y_2$ must be contained   in
the same orbit of $\mathrm{SO}(  V_1,\alpha)$. Since  $\mathrm{SO}(  V_1,\alpha)\subset L_0(  V,F)$, we have $F(y_1)=F(y_2)$. Thus
$F$ only depends on the values of $\alpha$ and $\beta$. Hence we can find a suitable function $\phi$
such that $F=\alpha\phi(\beta/\alpha)$.

(2)\quad
 Up to conjugation, $\mathrm{SO}( V,\alpha)$ is just the standard special orthogonal subgroup $\mathrm{SO}(n)$ and
$\mathrm{SO}( V_1,\alpha)$ the standard subgroup $\mathrm{SO}(n-1)\subset \mathrm{SO}(n)$. We have seen in the
above argument that, if $F$ is Riemannian, then $L_0( V,F)$, which is isomorphic to $\mathrm{SO}(n)$,
has a larger dimension than $\mathrm{SO}( V_1,\alpha)$.

Conversely,
assume that $\dim L_0( V,F) > \dim \mathrm{SO}( V_1,\alpha)$.
Then we can find an infinitesimal
generator $X$ of $L_0( V,F)$, nonzero vectors $ V_1\in  V_1$ and $ V_2\in  V_2$, such that $X( V_1)= V_2$ and $X( V_2)=- V_1$. Now $X$ generates an one-parameter of isometries, which is  just the action of $S^1$  as
rotations on the $2$-dimensional subspace $W$ generated by $ V_1$ and $ V_2$. Then the restriction of $F$ to $\mathbf{W}$  is invariant under the rotations generated by $X$, hence $F|_\mathbf{W}$ it is a Euclidean norm. Therefore $F$ must be of the form $\sqrt{a\alpha^2+b\beta^2}$ for some constants $a$ and $b$, and it is
a linear metric on $ V$.
\end{proof}

According to Lemma \ref{lem-3-1}, when $\dim V>2$, a non-Riemannian
$(\alpha,\beta)$-norm $F=\alpha\phi(\beta/\alpha)$ on $ V$ determines
a unique decomposition of $ V$ into the direct sum of
irreducible representations of $L_0( V,F)$, i.e., $ V= V_1+ V_2$, such that $ V_1$ is $(n-1)$-dimensional with
the natural action of $L_0( V,F)$, and $ V_2$ is $1$-dimensional with the
trivial action of $L_0( V,F)$. Since $\mathrm{ker}\beta= V_1$, $\beta$
is uniquely determined by $F$ up to a scalar multiplication. Moreover, the linear metric $\alpha$ is also uniquely determined
by $F$ in the sense that there are two positive scalars $c_1,c_2$ such that $\alpha|_{V_1}=c_1F|_{V_1}$ and $\alpha|_{V_2}=c_2F|_{V_2}$.

A triple $(\phi,\alpha,\beta)$ is called a normalized datum if we have
\begin{eqnarray*}
\alpha|_{ V_1}=F|_{ V_1},
\end{eqnarray*}
and for any $y\in V_2$ with $\beta (y) >0$, we have
\begin{eqnarray*}
\alpha (y)=\beta (y)=F (y).
\end{eqnarray*}

For a normalized datum $(\phi,\alpha,\beta)$,
we have $||\beta||_\alpha=1$. Thus
 $\phi$ is a smooth function on $[-1,1]$, and $\phi(0)=\phi(1)=1$.

The following corollary is obvious.
\begin{corollary}
Let $F$ be a non-Riemannian $(\alpha,\beta)$-norm on a real
linear space $ V$,  with $\dim V>2$. Then there are at most two normalized
data of $F$. Moreover, in any normalized datum $(\phi,\alpha,\beta)$ of $F$, $\alpha$ and $\beta$ are
invariant under the action of $L_0( V,F)$.
\end{corollary}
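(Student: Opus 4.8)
The plan is to extract everything from the uniqueness statements already established in the paragraph following Lemma \ref{lem-3-1}. Since $F$ is non-Riemannian and $\dim V>2$, that discussion supplies a unique $L_0(V,F)$-invariant splitting $V=V_1\oplus V_2$, with $\dim V_1=n-1$ and $\dim V_2=1$, on which $L_0(V,F)$ acts by the natural (irreducible) representation and trivially, respectively. It further tells us that $\beta$ is pinned down up to a nonzero scalar by the requirement $\ker\beta=V_1$, and that the linear metric $\alpha$ is pinned down up to two positive scalars $c_1,c_2$ through $\alpha|_{V_1}=c_1 F|_{V_1}$ and $\alpha|_{V_2}=c_2 F|_{V_2}$ along this $\alpha$-orthogonal splitting. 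The whole task then reduces to counting how the normalization conditions fix these scalars, since $\phi$ is in turn determined by $\alpha$ and $\beta$ via $F=\alpha\phi(\beta/\alpha)$.

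First I would dispose of the $V_1$ part. The normalization $\alpha|_{V_1}=F|_{V_1}$ forces $c_1=1$, so $\alpha$ is completely determined on $V_1$, with no remaining freedom; here $F|_{V_1}$ is indeed a Euclidean norm, because $\mathrm{SO}(V_1,\alpha)\subset L_0(V,F)$ acts on $V_1$ by all rotations. The only surviving freedom is therefore the line $V_2$ together with the sign and scale of $\beta$. Choosing a nonzero $y\in V_2$, the two open half-lines $\{\,ty:t>0\,\}$ and $\{\,ty:t<0\,\}$ are the only candidates for the region where $\beta>0$. Fixing one of them, the normalization $\alpha(y)=\beta(y)=F(y)$ simultaneously scales $\beta$ and scales $\alpha$ on $V_2$ (hence fixes $c_2$), since $\alpha,\beta,F$ are all positively homogeneous of degree one. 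Thus each of the two sign choices yields at most one normalized datum, so $F$ admits at most two. When $F$ is non-reversible on $V_2$, i.e. $F(y)\neq F(-y)$, the two data are genuinely distinct, as $\alpha$ is symmetric while $\beta$ changes sign.

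For the invariance claim, I would use that in any normalized datum the splitting $V=V_1\oplus V_2$ is $\alpha$-orthogonal (it is precisely the splitting underlying the $(\alpha,\beta)$-presentation of Lemma \ref{lem-3-1}), so $\alpha$ is the orthogonal sum of $\alpha|_{V_1}=F|_{V_1}$ and $\alpha|_{V_2}$. The first summand is $L_0(V,F)$-invariant because $F$ is and $V_1$ is $L_0(V,F)$-stable; the second lives on the trivial factor $V_2$ and is automatically invariant. Hence $\alpha$ is $L_0(V,F)$-invariant. Likewise $\beta$ vanishes on $V_1$ and is defined on the trivial factor $V_2$, so it is $L_0(V,F)$-invariant as well.

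The single point that needs care — the main, though minor, obstacle — is confirming that the normalized $\alpha$ really splits orthogonally along $V_1\oplus V_2$, rather than merely restricting correctly to each factor; this orthogonality is exactly what the ``$\alpha$ determined up to $c_1,c_2$ on an $\alpha$-orthogonal decomposition'' statement preceding the corollary provides, and it is what drives the invariance argument. Everything else is the homogeneity-degree-one bookkeeping carried out above.
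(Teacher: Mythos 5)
Your argument is correct and follows exactly the route the paper intends: the paper offers no written proof beyond declaring the corollary obvious from the preceding paragraph (the uniqueness of the irreducible decomposition $V=V_1\oplus V_2$, of $\beta$ up to scale, and of $\alpha$ up to the scalars $c_1,c_2$), and your write-up simply makes that bookkeeping explicit, including the correct resolution of the one delicate point that the splitting is $\alpha$-orthogonal for any admissible presentation. Nothing is missing.
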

\subsection{An existence theorem}
Now we turn  back to  homogeneous non-Riemannian $(\alpha,\beta)$-spaces. The following
theorem tells us  that in most cases a good datum exists.  Moreover, the proof of the following problem shows how to find a good normalized datum.

\begin{theorem}\label{thm-3-3}
Let $(M,F)$ be a homogeneous non-Riemannian Finsler space such that the restriction  of $F$ to any tangent space is an  $(\alpha,\beta)$-norm. Suppose there is  a closed connected subgroup $G$  of $I_0(M,F)$ which acts transitively on $M$  such that the isotropy subgroup $H$
at a point $x_0\in M$ is connected. Then $F$ is an $(\alpha,\beta)$-metric. Moreover,  there is  a good global datum $(\phi,\alpha,\beta)$ of $F$ such that the restriction of the datum
to any tangent space  is a normalized datum.
\end{theorem}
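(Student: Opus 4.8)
The plan is to promote the fiberwise data produced by Lemma \ref{lem-3-1} and the Corollary to a smooth global tensor field, and then verify that it is $G$-invariant (hence $I_0(M,F)$-invariant). First I would work at the base point $x_0$. Since $F|_{T_{x_0}M}$ is a non-Riemannian $(\alpha,\beta)$-norm, the Corollary supplies a normalized datum $(\phi,\alpha_{x_0},\beta_{x_0})$, together with the $L_0(T_{x_0}M,F)$-invariant decomposition $T_{x_0}M=V_1\oplus V_2$ with $\ker\beta_{x_0}=V_1$. The crucial observation is that the linearized isotropy action of $H$ at $x_0$ lands inside $L_0(T_{x_0}M,F)$: every $h\in H$ fixes $x_0$ and preserves $F$, so its differential is a linear isometry of the Minkowski norm $F|_{T_{x_0}M}$, and since $H$ is connected this differential lies in the unity component $L_0(T_{x_0}M,F)$. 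Therefore $\mathrm{Ad}(H)$ preserves the decomposition $V_1\oplus V_2$ and fixes both $\alpha_{x_0}$ and $\beta_{x_0}$ (up to the normalization, exactly). This is what makes the datum $\mathrm{Ad}(H)$-invariant, which is precisely the condition needed to spread it over $M=G/H$ by the transitive $G$-action.

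Next I would define the global objects by transport. For any $x=g\cdot x_0$ set $\alpha_x:=(g^{-1})^*\alpha_{x_0}$ and $\beta_x:=(g^{-1})^*\beta_{x_0}$. The $\mathrm{Ad}(H)$-invariance established above is exactly the well-definedness condition: if $g_1\cdot x_0=g_2\cdot x_0$ then $g_2^{-1}g_1\in H$, so the two candidate values of $\alpha_x$ (resp. $\beta_x$) agree. This yields a Riemannian metric $\alpha$ and a $1$-form $\beta$ on $M$ that are manifestly $G$-invariant and whose restriction to each tangent space is, by construction, the normalized datum of $F|_{T_xM}$ guaranteed fiberwise. Because the normalized $\phi$ is the same function at every point (the decomposition is carried along isometrically, and $\|\beta\|_\alpha\equiv 1$ with $\phi(0)=\phi(1)=1$ as recorded after the Corollary), one recovers $F=\alpha\phi(\beta/\alpha)$ globally, showing $F$ is genuinely an $(\alpha,\beta)$-metric.

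It then remains to upgrade $G$-invariance to invariance under all of $I_0(M,F)$, i.e.\ to verify this really is a \emph{good} datum. Here I would invoke the uniqueness clause of the Corollary: at each point $F$ admits at most two normalized data, and the non-Riemannian hypothesis pins down $V_1,V_2$ intrinsically as the irreducible summands of the $L_0$-action. Any $\varphi\in I_0(M,F)$ carries normalized data to normalized data, so it permutes this finite pointwise set; by connectedness of $I_0(M,F)$ and continuity the permutation is trivial, forcing $\varphi^*\alpha=\alpha$ and $\varphi^*\beta=\beta$. The main obstacle I anticipate is the smoothness of $\alpha$ and $\beta$ as global tensor fields: the fiberwise decomposition and scalars $c_1,c_2$ depend on $F$, and one must check they vary smoothly in $x$. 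The cleanest route is to note that transport by the smooth $G$-action automatically makes $\alpha,\beta$ smooth, reducing smoothness entirely to the single algebraic extraction at $x_0$, so that the potentially delicate ``continuously varying eigenspace'' issue never arises globally.
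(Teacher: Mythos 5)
Your proposal is correct and follows essentially the same route as the paper: use connectedness of $H$ so the isotropy representation lands in $L_0(T_{x_0}M,F)$ and fixes the normalized datum, transport by the transitive $G$-action to get a well-defined smooth $G$-invariant global datum, and then upgrade to $I_0(M,F)$-invariance by combining the finiteness (at most two) of normalized data at each point with connectedness of $I_0(M,F)$ via a continuity argument. The paper phrases the last step as the constancy of a continuous family of normalized data along a path $\rho_t$ from the identity to $\rho$, which is exactly your ``the permutation is trivial'' argument.
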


\begin{proof}
First we construct the global datum $(\phi,\alpha,\beta)$ for $F$. Notice that since  $F$ is  non-Riemannian,  the restriction of $F$ to  a tangent space cannot be a linear metric. In particular, the restriction of $F$ to   $T_{x_0}M$ is a non-euclidean $(\alpha,\beta)$-norm, hence  there exists a normalized
datum $(\phi,\alpha,\beta)$ for   $F(x_0,\cdot)$. Then for any $g\in G$,
$(\phi,g^*\alpha,g^*\beta)$ is a normalized datum for $F({g^{-1}x_0},\cdot)$. Now for any two elements $g$ and
$g'$ in $G$ such that $g^{-1}x_0=g'^{-1}x_0$, $gg'^{-1}\in H$ defines an element in
$L_0(TM_{x_0},F(x_0,\cdot))$ by the connectedness of $H$. So we have $gg'^{-1}\alpha=\alpha$ and $gg'^{-1}\beta=\beta$. Thus the normalized data $(\phi,g^*\alpha,g^*\beta)$ and $(\phi,g'^*\alpha,g'^*\beta)$
coincide. By the smoothness of the action, these data form a smooth global datum $(\phi,\alpha,\beta)$ for
$F=\alpha\phi(\beta/\alpha)$. Therefore  $F$ is an $(\alpha,\beta)$-metric.

Given $\rho\in I_0(M,F)$, there is a continuous family $\rho_t\in I_0(M,F)$ such that
 $\rho_0=\mathrm{id}$ and  $\rho_1=\rho$.  For each $x\in M$,
$(\phi,\rho_t^*(\alpha|_{\rho_t(x)}),\rho_t^*(\beta|_{\rho_t(x)}))$ is a continuous family
of normalized data for the $(\alpha,\beta)$-norm $F(x,\cdot)$. It must be a constant family. Thus  $\rho^*\alpha=\alpha$ and $\rho^*\beta=\beta$. Hence the normalzied datum $(\phi,\alpha,\beta)$ is  a good datum.
\end{proof}

As an  example, let $F$ be a left invariant non-Riemannian $(\alpha,\beta)$-metric on a compact connected
Lie group $G$. Denote $G'=I_0(G,F)$. Then   $L(G)\subset G'$, and the Lie group $G$ can be written as   $G=G'/H$, where
$H$ is the isotropy subgroup of $G'$ at $e\in G$. Since $G'$ is diffeomorphic to the product of
$G$ and $H$,  $H$ is connected. By Theorem \ref{thm-3-3}, we can find a good normalized datum for $F$.


\section{Restrictive CW-homogeneity and left invariant $(\alpha,\beta)$-metrics on a compact connected simple $G$}
\subsection{Some notations}
We first introduce  some notations which will be used throughout this section.

Let $G$ be a compact connected simple Lie group, and
$F$  a left invariant non-Riemannian $(\alpha,\beta)$-metric on $G$.
Theorem \ref{thm-3-3} indicates that we can find a good normalized datum for $F$. We denote  the restriction of the datum to
$T_e G=\mathfrak{g}$  as $(\phi,\alpha,\beta)$.
Meanwhile, the $(\alpha,\beta)$-norm defined by $F$ in $\mathfrak{g}$ will also be
denoted as $F=\alpha\phi(\beta/\alpha)$.

By Theorem \ref{thm-3-3} and \cite{OT76}, we have
$I_0(G,F)\subset L(G)R(G)$. Let $G'$ be the maximal connected closed subgroup of $G$, such that
$R(G')$ consists of isometric right translations.
Then $I_0(G,F)=L(G)R(G')$.
Denote $\mathrm{Lie}(G)=\mathfrak{g}$
and $\mathrm{Lie}(G')=\mathfrak{g}'$. The space of Killing vector fields of $(G,F)$
can be identified with the Lie algebra of $I_0(G,F)$, i.e., $\mathfrak{g}\oplus\mathfrak{g}'$.
The isotropy subgroup at $e\in G$ is isomorphic to $G'$, whose Lie algebra is $\{(X', X')|X'\in \mathfrak{g}'\}$. The group $G'$ can also be identified with the maximal  connected closed subgroup
of $G$ whose $\mathrm{Ad}$-action preserves the functions $\alpha$ and $\beta$ on $\mathfrak{g}$.

The inner product defined by  $\alpha$ on $\mathfrak{g}$ will be denoted as $\langle\cdot,\cdot\rangle$, and the inner product corresponding to  the bi-invariant linear metric
$||\cdot||_{\mathrm{bi}}$ will be denoted as $\langle\cdot,\cdot,\rangle_{\mathrm{bi}}$.
Let $ v$ and $ v'$ be the  nonzero vectors in $\mathfrak{g}$ such that
$\beta(\cdot)=\langle  v,\cdot\rangle=\langle  v',\cdot\rangle_{\mathrm{bi}}$. Then it is
easy to see that $G'\subset C_G( v)$ and $\mathfrak{g}\subset \mathfrak{c}_\mathfrak{g}( v)$.

\subsection{The decomposition of the set of  KVFCLs}
The interrelation between CW-translations and KVFCLs, and in particular Proposition
\ref{pro-2-5} suggest that  we should study the set of KVFCLs.

We first prove  a similar criterion for a Killing vector field
$(X,X')\in\mathfrak{g}\oplus\mathfrak{g}'$ to have constant length as in the Randers case
\cite{DX3}
\begin{theorem}\label{thm-4-1}
If $(X,X')\in\mathfrak{g}\oplus\mathfrak{g}'$ defines a KVFCL of $F$, then either $X=0$ or
$X'\in\mathfrak{c}(\mathfrak{g}')$.
\end{theorem}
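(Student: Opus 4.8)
The plan is to convert the constant-length condition into a single scalar identity on the group and then differentiate it in the right-translation directions. Writing the one-parameter group generated by $(X,X')\in\mathfrak{g}\oplus\mathfrak{g}'$ as $g\mapsto\exp(tX)\,g\,\exp(-tX')$ and left-translating its velocity field back to $\mathfrak{g}$, the value at $g$ becomes $\mathrm{Ad}(g^{-1})X-X'$, so its $F$-length at $g$ equals $F(\mathrm{Ad}(g^{-1})X-X')$. Hence $(X,X')$ is a KVFCL if and only if the function $a\mapsto F(\mathrm{Ad}(a)X-X')$ is constant, say $\equiv c$, on $G$. If $X=0$ there is nothing to prove, so I assume $X\neq0$; then the adjoint orbit $O=\mathrm{Ad}(G)X$ is not a single point, which (since $c=F(0)=0$ would force $\mathrm{Ad}(a)X\equiv X'$) shows $c>0$ and $\mathrm{Ad}(a)X-X'\neq0$ for every $a$.

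The key is to exploit the good normalized datum. Because $(\phi,\alpha,\beta)$ is good, $\alpha$ and $\beta$, and therefore $F$, are $\mathrm{Ad}(G')$-invariant. Applying $\mathrm{Ad}(b^{-1})$, $b\in G'$, inside $F$ upgrades the identity to
\[
F\big(\mathrm{Ad}(a)X-\mathrm{Ad}(b)X'\big)=c\qquad\text{for all }a\in G,\ b\in G'.
\]
Differentiating in $b$ at $b=e$ along $Y'\in\mathfrak{g}'$ yields, with $Y=\mathrm{Ad}(a)X-X'$,
\[
dF_Y\big([Y',X']\big)=0\qquad\text{for all }a\in G,\ Y'\in\mathfrak{g}'.
\]
Notably, the derivative in the $G$-direction is not needed for this theorem.

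Next I would use the explicit differential of an $(\alpha,\beta)$-norm. A direct computation gives, for $U\in\mathfrak{g}$ and $s=\beta(Y)/\alpha(Y)$,
\[
dF_Y(U)=\frac{\phi(s)-s\phi'(s)}{\alpha(Y)}\,\langle Y,U\rangle+\phi'(s)\,\beta(U),
\]
where $\langle\cdot,\cdot\rangle$ is the $\alpha$-inner product and the coefficient $\phi(s)-s\phi'(s)$ is positive by the strong convexity condition (\ref{f-2-1}) with $b=s$. I now substitute $U=[Y',X']$. Since $Y',X'\in\mathfrak{g}'$ and $\alpha$ is $\mathrm{Ad}(G')$-invariant, $\mathrm{ad}_{Y'}$ is skew-symmetric for $\langle\cdot,\cdot\rangle$; together with $[\mathfrak{g}',v]=0$ this kills the $\beta$-term ($\beta([Y',X'])=\langle v,[Y',X']\rangle=-\langle[Y',v],X'\rangle=0$) and, again by skew-symmetry of $\mathrm{ad}_{Y'}$, collapses $\langle Y,[Y',X']\rangle$ to $\langle\mathrm{Ad}(a)X,[Y',X']\rangle$. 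Thus the displayed vanishing becomes
\[
\big\langle\,\mathrm{Ad}(a)X,\ [Y',X']\,\big\rangle=0\qquad\text{for all }a\in G,\ Y'\in\mathfrak{g}'.
\]

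Finally I would invoke simplicity. For $X\neq0$ the span of $O=\mathrm{Ad}(G)X$ is a nonzero ideal of the simple algebra $\mathfrak{g}$, hence all of $\mathfrak{g}$; so the last identity forces $[Y',X']=0$ for every $Y'\in\mathfrak{g}'$, that is, $X'\in\mathfrak{c}(\mathfrak{g}')$. The main obstacle, and the step where the construction of the good normalized datum is indispensable, is establishing the $\mathrm{Ad}(G')$-upgraded identity and the resulting relation $dF_Y([\mathfrak{g}',X'])=0$: without the $G'$-invariance of $\alpha$ and $\beta$ one cannot move $\mathrm{Ad}(b)$ through $F$, nor use the $\alpha$-skewness of $\mathrm{ad}_{\mathfrak{g}'}$ that makes the $\beta$-term drop and replaces $\langle Y,\cdot\rangle$ by $\langle\mathrm{Ad}(a)X,\cdot\rangle$. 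The appeal to simplicity is precisely why this case is separated from the general semisimple one, which the paper later handles by induction.
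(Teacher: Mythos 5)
Your proof is correct and follows essentially the same route as the paper: differentiate the constant-length identity along the right-translation directions in $\mathfrak{g}'$, use the positivity of $\phi(s)-s\phi'(s)$ from the strong convexity condition together with the $\mathrm{Ad}(G')$-invariance of $\alpha$ and $\beta$ to reduce to $\langle \mathrm{Ad}(a)X,[Y',X']\rangle=0$, and then invoke simplicity of $\mathfrak{g}$ to force $[\mathfrak{g}',X']=0$. The only cosmetic differences are that you write out $dF_Y$ explicitly where the paper first observes the $\beta$-term is constant in $g'$, and you conclude via the span of the adjoint orbit rather than the ideal generated by $[X,\mathfrak{g}]$ — these are equivalent.
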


\begin{proof}
The Killing vector field defined by $(X,X')$ has the $F$-length
$F(\mathrm{Ad}(g)X-\mathrm{Ad}(g')X')$ at $g g'^{-1}$, for $g\in G$ and $g'\in G'$.
If $(X,X')$ defines a KVFCL, then
\begin{equation}\label{f-4-5}
\alpha(\mathrm{Ad}(g)X-\mathrm{Ad}(g')X')\phi(\frac{\beta(\mathrm{Ad}(g)X-\mathrm{Ad}(g')X')}{\alpha(\mathrm{Ad}(g)X-\mathrm{Ad}(g')X')})=\mathrm{const},\forall g\in G, g'\in G'.
\end{equation}
Thus for a fixed $g\in G$, $\beta(\mathrm{Ad}(g)X-\mathrm{Ad}(g')X')=\beta(Ad(g)X-X')$
is a constant function of $g'$. For  $Y\in\mathfrak{g}'$ and
$g'_0\in G'$, denote
\begin{equation}
X_{g'_0,t,Y}=\mathrm{Ad}(g)X-\mathrm{Ad}(\exp(tY)g'_0)X',
\end{equation}
$s_{g'_0,t,Y}=\beta(X_{g'_0,t,Y})/\alpha(X_{g'_0,t,Y})$, and
\begin{equation}
s_0=\frac{\beta(\mathrm{Ad}(g)X-\mathrm{Ad}(g'_0)X')}{\alpha(\mathrm{Ad}(g)X-\mathrm{Ad}(g'_0)X')}.
\end{equation}
Setting $g'=\exp(tY)g'_0$ in (\ref{f-4-5}),  taking the differential   with respect to $t$
and considering the value at $t=0$, we have
\begin{equation}
(\phi(s_0)-s_0\phi'(s_0))\frac{d}{dt}\alpha(X_{g'_0,t,Y})|_{t=0}=0,
\forall Y\in\mathfrak{g}', g_0\in G'.
\end{equation}
By (\ref{f-2-1}) and (\ref{f-4-5}), $\alpha(\mathrm{Ad}(g)X-\mathrm{Ad}(g')X')$ must
also be a constant function of $g'$. Note that neither $\alpha(\mathrm{Ad}(g)X)$ nor
$\alpha(\mathrm{Ad}(g')X')=\alpha(X')$ depends on $g'$, so
$\langle \mathrm{Ad}(g)X,\mathrm{Ad}(g')X'\rangle$
is a constant function of $g'$. Thus for any $g\in G$, $\mathrm{Ad}(g)X$ is
$\alpha$-orthogonal to the ideal generated by $[X',\mathfrak{g}']$ in $\mathfrak{g}'$.
Now change $g$ arbitrarily, we can prove that   the ideal of $\mathfrak{g}$ generated by
$[X,\mathfrak{g}]$ is
$\alpha$-orthogonal to the ideal of $\mathfrak{g}'$generated by $[X',\mathfrak{g}']$.
If $X\neq 0$, then $X'$ generates the $0$ ideal in $\mathfrak{g}'$. Thus
$X'\in\mathfrak{c}(\mathfrak{g}')$. This completes the proof.
\end{proof}

For simplicity, we denote the set of  KVFCLs of the metric $F$ as $\mathcal{K}_F$.
Theorem \ref{thm-4-1} implies that  $\mathcal{K}_F$ can be decomposed into the union of
$\mathcal{K}_{F;1}$ and $\mathcal{K}_{F;2}$, where $\mathcal{K}_{F;1}$ is the closure
of the set of  KVFCLs $(X,X')$ with $X\neq 0$, and $\mathcal{K}_{F;2}$ is the
linear subspace $0\oplus\mathfrak{g}'$. The following lemma shows that
$\mathcal{K}_{F;1}\cap\mathcal{K}_{F;2}=\{0\}$.

\begin{lemma}\label{lem-4-2}
There is a constant $C>0$, such that for any KVFCL $(X,X')\in\mathcal{K}_{F;1}$, we have
 $||X'||_{\mathrm{bi}}<C||X||_{\mathrm{bi}}$.
\end{lemma}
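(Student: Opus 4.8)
The plan is to establish a uniform bound $\|X'\|_{\mathrm{bi}}\le C_0\|X\|_{\mathrm{bi}}$ for every KVFCL $(X,X')$ with $X\ne 0$, and then pass to the closure $\mathcal{K}_{F;1}$, enlarging $C_0$ to a strict constant $C$; apart from the origin this yields the asserted separation $\mathcal{K}_{F;1}\cap\mathcal{K}_{F;2}=\{0\}$. First I would invoke Theorem \ref{thm-4-1}: since $X\ne 0$ we have $X'\in\mathfrak{c}(\mathfrak{g}')$, so $\mathrm{Ad}(g')X'=X'$ for all $g'\in G'$ (as $G'$ is connected), and the constant-length condition (\ref{f-4-5}) collapses to the single clean equation $F(\mathrm{Ad}(g)X-X')\equiv c$ for all $g\in G$, where $c>0$ is the common length. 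Because the KVFCL condition is invariant under the scaling $(X,X')\mapsto(\lambda X,\lambda X')$ for $\lambda>0$, it suffices to bound the ratio $\|X'\|_{\mathrm{bi}}/\|X\|_{\mathrm{bi}}$.

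The geometric content of $F(\mathrm{Ad}(g)X-X')\equiv c$ is that the whole $\mathrm{Ad}(G)$-orbit $\mathcal{O}=\mathrm{Ad}(G)X$, translated by $-X'$, lies on the single indicatrix $S_F(c)=\{w:F(w)=c\}$, the boundary of the strongly convex body $K=\{w:F(w)\le c\}$. Here I would exploit that $G$ is simple: the linear span of $\mathcal{O}$ is an $\mathrm{Ad}(G)$-invariant subspace, hence an ideal, hence all of $\mathfrak{g}$, while the centroid $\int_G\mathrm{Ad}(g)X\,dg$ is $\mathrm{Ad}(G)$-fixed, hence $0$. Consequently the convex hull of $\mathcal{O}$ contains a bi-invariant ball $B_{\mathrm{bi}}(0,\rho_0\|X\|_{\mathrm{bi}})$, where $\rho_0>0$ can be taken uniform over all $X$ by continuity and positivity of the inradius of $\mathrm{conv}(\mathrm{Ad}(G)\hat X)$ on the compact unit sphere ($\hat X=X/\|X\|_{\mathrm{bi}}$). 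Since $\mathcal{O}-X'\subseteq\partial K$ and $K$ is convex, $\mathrm{conv}(\mathcal{O})-X'\subseteq K$; combining this with the inradius estimate and the norm equivalence $c_1\|w\|_{\mathrm{bi}}\le F(w)\le c_2\|w\|_{\mathrm{bi}}$ (valid for some $0<c_1\le c_2$ since $F$ is a Minkowski norm) yields a one-sided estimate of the form $\|X'\|_{\mathrm{bi}}+\rho_0\|X\|_{\mathrm{bi}}\le c/c_1$.

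The main obstacle is the matching upper estimate: to convert the above into $\|X'\|_{\mathrm{bi}}\le C_0\|X\|_{\mathrm{bi}}$ one must bound the length constant $c$ from above by a multiple of $\|X\|_{\mathrm{bi}}$. Crude norm equivalence alone is insufficient here, as it only produces circular inequalities, because a single indicatrix can meet the bi-invariant sphere of radius $\|X\|_{\mathrm{bi}}$ in the lower-dimensional set $\mathcal{O}-X'$ while bulging much farther out in the directions missed by the orbit. The crux is therefore a quantitative use of the strong convexity of $F$. I would make this precise by observing that every translated orbit point lies at bi-distance exactly $\|X\|_{\mathrm{bi}}$ from the centroid $-X'$, so that $-X'$ is a common circumcenter of the connected family $\mathcal{O}-X'\subseteq\partial K$ surrounding it; using uniform bounds on the principal curvatures of $S_F(1)$ (finite and positive by smoothness and the strong convexity (\ref{f-2-1}), then scaled by $1/c$ for $S_F(c)$), one controls how far $\partial K$ can deviate from the sphere $\partial B_{\mathrm{bi}}(-X',\|X\|_{\mathrm{bi}})$ through the gaps between orbit directions, forcing $c\le C_1\|X\|_{\mathrm{bi}}$.

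An alternative to the curvature argument, closer in spirit to the good normalized datum, is to decompose $w=\mathrm{Ad}(g)X-X'$ as $w=w_1+\beta(w)v$ with $w_1\in\ker\beta$ and $v$ the $\alpha$-unit vector dual to $\beta$, so that $F(w)$ depends only on the pair $(\alpha(w_1),\beta(w))$; tracking how this pair varies as $g$ ranges over $G$, together with the $\mathrm{Ad}(G')$-invariance of $\alpha$ and $\beta$ and the relation $\mathfrak{g}'\subseteq\mathfrak{c}_{\mathfrak{g}}(v)$, should again pin $c$ to the scale of $\|X\|_{\mathrm{bi}}$. Once $\|X'\|_{\mathrm{bi}}\le C_0\|X\|_{\mathrm{bi}}$ is proved for KVFCLs with $X\ne 0$, the inequality survives taking closures, and choosing $C=C_0+1$ gives the stated strict bound on $\mathcal{K}_{F;1}$ away from the origin, whence $\mathcal{K}_{F;1}\cap\mathcal{K}_{F;2}=\{0\}$.
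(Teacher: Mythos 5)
Your reduction is sound up to a point: normalizing by scale, using Theorem \ref{thm-4-1} to write the condition as $F(\mathrm{Ad}(g)X-X')\equiv c$, and observing that $\mathrm{conv}(\mathcal{O}_X)$ contains a bi-invariant ball $B(0,\rho_0\|X\|_{\mathrm{bi}})$ (the span of the orbit is an ideal, its centroid is zero) correctly yields the one-sided estimate $c\ge c_1(\|X'\|_{\mathrm{bi}}+\rho_0\|X\|_{\mathrm{bi}})$, and you correctly identify that everything hinges on the converse estimate $c\le C_1\|X\|_{\mathrm{bi}}$. But the mechanism you propose for that step does not work. You claim that curvature bounds on $S_F(c)$ control ``how far $\partial K$ can deviate from the sphere $\partial B_{\mathrm{bi}}(-X',\|X\|_{\mathrm{bi}})$ through the gaps between orbit directions.'' This fails for two reasons. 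First, the orbit $\mathcal{O}_X-X'$ is a lower-dimensional subset of $\partial K$ (its dimension is at most $\dim\mathfrak{g}-\mathrm{rank}\,\mathfrak{g}$), so the ``gaps'' are open sets of directions in which $\partial K$ is not pinned at all; in particular $K$ contains $0$, and the ray from $-X'$ through $0$ meets $\partial K$ at bi-distance at least $\|X'\|_{\mathrm{bi}}$ from $-X'$, so $\partial K$ genuinely deviates from that sphere by an amount comparable to the very quantity you are trying to bound. Second, the curvature bound points the wrong way: if $c/\|X\|_{\mathrm{bi}}$ is large, the principal curvatures of $\partial K$ are of order $1/c$, i.e.\ small, so near the orbit $\partial K$ is nearly \emph{flat} and hence far from the small sphere of radius $\|X\|_{\mathrm{bi}}$, not close to it.

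The missing idea --- and the actual content of the paper's proof --- is precisely this flatness. Normalize $\|X_n\|_{\mathrm{bi}}=1$ and suppose $\|X'_n\|_{\mathrm{bi}}\to\infty$; then $l_n=F(\mathrm{Ad}(g)X_n-X'_n)\to\infty$, the hypersurfaces $\{Y: F(Y-X'_n)=l_n\}$ (which contain the orbits and are translates of $l_n\cdot S_F(1)$) have principal curvatures tending to $0$, and inside a fixed ball they converge to an affine hyperplane; a limit orbit $\mathcal{O}_X$ with $X\ne 0$ would then lie in a hyperplane, which is impossible for compact simple $G$ since the orbit spans $\mathfrak{g}$ and has centroid $0$. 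Your inradius observation is exactly the right tool for that final contradiction, but you deploy it only for the easy lower bound on $c$. The ``alternative'' via the decomposition $w=w_1+\beta(w)v$ is too vague to assess and does not supply the missing step either. In short: you have assembled the right ingredients, but the decisive inequality rests on an incorrect geometric picture, so the proof has a genuine gap at its crux.
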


\begin{proof}
The Lie algebra $\mathfrak{g}$ will be viewed as a flat manifold with the metric $\langle\cdot,\cdot\rangle_{\mathrm{bi}}$, and any submanifold in it will be endowed
with the induced metric.

Suppose conversely that the constant $C>0$ indicated in the lemma does not exist. Then
there is a sequence of $(X_n,X'_n)\in\mathcal{K}_{F;1}$ such that
$||X_n||_{\mathrm{bi}}=1$, $X'_n\in\mathfrak{c}(\mathfrak{g}')$ with
$\mathop{\lim}\limits_{n\rightarrow\infty}||X'_n||_{\mathrm{bi}}=\infty$.
Denote $F(Ad(g)X_n-X'_n)=l_n$. Then the sequence $\{l_n\}$ also diverges to $\infty$.
The $\mathrm{Ad}(G)$-orbit $\mathcal{O}_{X_n}$ is contained in
the hypersurface
\begin{equation}
\mathcal{S}_n=\{Y|F(Y-X'_n)=l_n\}\subset \mathfrak{g},
\end{equation}
on which the $C^0$-norm of all
principal curvatures converges to $0$ when $n\rightarrow \infty$. Taking  a suitable sequence if necessary,
we can assume that  $\mathop{\lim}\limits_{n\rightarrow\infty}X_n=X$. Then in the closed round ball with center $0$
and radius $3$ (with respect to the bi-invariant metric), the hypersurfaces
$\mathcal{S}_n$ converges to a flat hyperplane $\mathcal{S}$ of codimension $1$
in $\mathfrak{g}$. Hence the hyperplane $\mathcal{S}$ contains the $\mathrm{Ad}(G)$-orbit
$\mathcal{O}_X$ of the nonzero vector $X$. This can not happen for a compact connected simple $G$.
\end{proof}

The KVFCLs in $\mathcal{K}_{F;2}$ or the CW-translations generated by them are
relevant to $I_0(G,F)$ rather than $F$ itself.
Therefore they are of little interest to our study. The following corollary shows that in most cases, we only need
to consider the KVFCLs in $\mathcal{K}_{F;1}$.

\begin{corollary}\label{cor-4-3}
Keep all the notations as above. The metric $F$ is restrictively CW-homogeneous if and only if any
nonzero tangent vector can be extended to a KVFCL $(X,X')$ with $X\neq 0$.
\end{corollary}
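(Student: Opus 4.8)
The plan is to deduce this from Proposition \ref{pro-2-5}, which already characterizes restrictive CW-homogeneity as the property that every tangent vector extends to a KVFCL; the corollary only sharpens ``a KVFCL'' to ``a KVFCL with $X\neq 0$''. Recall that the Killing field attached to $(X,X')\in\mathfrak{g}\oplus\mathfrak{g}'$ takes the value $X-X'$ at $e$ (its $F$-length at $gg'^{-1}$ being $F(\mathrm{Ad}(g)X-\mathrm{Ad}(g')X')$), so extending a tangent vector $v\in\mathfrak{g}=T_eG$ amounts to finding a KVFCL $(X,X')$ with $X-X'=v$. The ``if'' direction is then immediate: if every nonzero $v$ extends to a KVFCL with $X\neq 0$, then every tangent vector (the zero vector being extended by $(0,0)$) extends to a KVFCL, so $F$ is restrictively CW-homogeneous by Proposition \ref{pro-2-5}.

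For the ``only if'' direction I would assume $F$ restrictively CW-homogeneous, so that by Proposition \ref{pro-2-5} every $v$ extends to some KVFCL, and then upgrade the extension to one with $X\neq 0$. The case $v\notin\mathfrak{g}'$ is easy: any extension $(X,X')$ satisfies $X-X'=v$, and $X=0$ would force $v=-X'\in\mathfrak{g}'$, a contradiction, so $X\neq 0$ automatically. The remaining and genuine case is $v\in\mathfrak{g}'$ with $v\neq 0$, where a priori $v$ might be realized only by $(0,-v)\in\mathcal{K}_{F;2}$.

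Here I would use a density argument. Since $F$ is non-Riemannian, $\mathfrak{g}'$ is a proper subspace of $\mathfrak{g}$ (otherwise all right translations would be isometric, $\beta$ would be $\mathrm{Ad}(G)$-invariant, hence $0$ as $G$ is simple, making $F$ Riemannian), so I may choose $v_n\to v$ with $v_n\notin\mathfrak{g}'$. By the case already treated, each $v_n$ extends to a KVFCL $(X_n,X'_n)$ with $X_n\neq 0$, whence $(X_n,X'_n)\in\mathcal{K}_{F;1}$, and I would pass to a limit to produce the desired extension of $v$.

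The crux, and the step I expect to be the main obstacle, is the compactness needed to pass to this limit, i.e. bounding $(X_n,X'_n)$. Lemma \ref{lem-4-2} controls $\|X'_n\|_{\mathrm{bi}}$ by $\|X_n\|_{\mathrm{bi}}$ but does not by itself bound $\|X_n\|_{\mathrm{bi}}$, so I would bound the latter by contradiction: if $\|X_n\|_{\mathrm{bi}}\to\infty$, rescale to $(\hat X_n,\hat X'_n)=(X_n,X'_n)/\|X_n\|_{\mathrm{bi}}$, still KVFCLs with $\|\hat X_n\|_{\mathrm{bi}}=1$ and $\|\hat X'_n\|_{\mathrm{bi}}\le C$; a subsequence converges to $(\hat X,\hat X')$ with $\|\hat X\|_{\mathrm{bi}}=1$ and $\hat X-\hat X'=\lim v_n/\|X_n\|_{\mathrm{bi}}=0$, so this KVFCL has value $0$ at $e$, hence constant $F$-length $F(0)=0$, hence is the zero field, corresponding to $(0,0)$ and contradicting $\|\hat X\|_{\mathrm{bi}}=1$. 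Thus $\|X_n\|_{\mathrm{bi}}$ is bounded, and with Lemma \ref{lem-4-2} so is $\|X'_n\|_{\mathrm{bi}}$; a convergent subsequence then has limit $(X,X')\in\mathcal{K}_{F;1}$ (which is closed by definition) with $X-X'=v\neq 0$, and $X\neq 0$ because $\mathcal{K}_{F;1}\cap\mathcal{K}_{F;2}=\{0\}$. This yields the required KVFCL extending $v$ with $X\neq 0$, completing the argument.
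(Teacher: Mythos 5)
Your proof is correct and follows essentially the same route as the paper's: reduce to the genuine case of a nonzero $u\in\mathfrak{g}'$, approximate by $u_n\notin\mathfrak{g}'$, extend each $u_n$ to a KVFCL in $\mathcal{K}_{F;1}$, and pass to a subsequential limit, using Lemma \ref{lem-4-2} to conclude $X\neq 0$. The only difference is that you explicitly justify the boundedness of $(X_n,X'_n)$ via the rescaling argument before extracting a convergent subsequence, a step the paper leaves implicit.
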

\begin{proof}
We only need to prove the ``only if" part. Suppose $F$ is restrictively CW-homogeneous.
Notice that $\mathcal{K}_{F;2}$ can only cover tangent vectors in a subspace
with positive codimension in each tangent space. For a nonzero tangent vector outside those
subspaces, the existence of the extension follows directly from the restrictive
CW-homogeneity of $F$. Now consider an arbitrary  nonzero $u\in \mathfrak{g}'\subset T_e G$.
One can find a sequence of tangent vectors $ u_n\in T_e G$ with
$ u_n\notin\mathfrak{g}'$, $\forall n$, such that $ u=\mathop{\lim}\limits_{n\rightarrow \infty} u_n$.
Each tangent vector $ u_n$ can be extended to a KVFCL $(X_n,X'_n)\in\mathcal{K}_{F;1}$.
By taking a subsequence, we can have
\begin{equation}
\lim_{n\rightarrow\infty}(X_n,X'_n)=(X,X')\in\mathcal{K}_{F;1}\backslash 0,
\end{equation}
which takes the value $u$ at $e$. By Lemma \ref{lem-4-2}, $X\neq 0$.
For tangent vectors at other points, the argument is similar. This completes the
proof of the corollary.
\end{proof}

The following theorem implies, for each $X\in \mathfrak{g}$, there are not too much
$X'\in\mathfrak{c}(\mathfrak{g}')$ such that $(X,X')\in\mathcal{K}_{F;1}$.
\begin{theorem}\label{thm-4-4}
Keep all notations as above. Suppose both $(X,X')$ and $(X,X'')$ define KVFCLs in
$\mathcal{K}_{F;1}$. Then there exists $c\in\mathbb{R}$ such that $X'-X''=cv$, where
$ v$ is the $\alpha$-dual of $\beta$.
\end{theorem}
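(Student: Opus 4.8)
The plan is to reduce the two constant-length conditions to a single scalar identity along the adjoint orbit of $X$, to cancel the ambient quadratic part (mimicking the Randers computation of \cite{DX3}), and finally to exploit the simplicity of $G$.

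First I would dispose of the degenerate case: if $X=0$ then Lemma \ref{lem-4-2} forces $X'=X''=0$, so I assume $X\neq 0$. Every element of $\mathcal{K}_{F;1}$ is a limit of genuine KVFCLs and hence is itself a KVFCL, so Theorem \ref{thm-4-1} applies and gives $X',X''\in\mathfrak{c}(\mathfrak{g}')$; in particular both are fixed by $\mathrm{Ad}(G')$. Thus $\mathrm{Ad}(g')X'=X'$ for all $g'\in G'$, and the length functions collapse to $F(\mathrm{Ad}(g)X-X')\equiv l_1$ and $F(\mathrm{Ad}(g)X-X'')\equiv l_2$ for all $g\in G$. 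Writing $W=\mathrm{Ad}(g)X$, the orbit $\mathcal{O}_X$ thus lies simultaneously on the two $F$-spheres centered at $X'$ and $X''$.

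The engine is the strong convexity inequality (\ref{f-2-1}). Setting $\Psi(A,B)=A\,\phi(B/\sqrt{A})^2$, so that $F(u)^2=\Psi(\alpha(u)^2,\beta(u))$, a direct computation gives $\partial_A\Psi=\phi(\phi-s\phi')>0$ with $s=B/\sqrt{A}$, the positivity being exactly the case $b=s$ of (\ref{f-2-1}). Hence each level set $\{\Psi=l^2\}$ is the graph of a smooth function $A=\chi(B)$, and on the orbit I obtain $\alpha(W-X')^2=\chi_1(\beta(W-X'))$ together with $\alpha(W-X'')^2=\chi_2(\beta(W-X''))$. Expanding $\alpha(W-X_\bullet)^2=\alpha(W)^2-2\langle W,X_\bullet\rangle+\|X_\bullet\|^2$, the ambient term $\alpha(W)^2$ is common to both equations; eliminating it cancels all quadratic dependence on $W$ and leaves $\langle W,X'-X''\rangle=R(\beta(W))$ for every $W\in\mathcal{O}_X$, where $R$ is a function of the single variable $\beta(W)=\langle v,W\rangle$.

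It remains to show $X'-X''\in\mathbb{R}v$. Decompose $X'-X''=cv+u$ with $\langle u,v\rangle=0$; I must prove $u=0$. Substituting, $\langle W,u\rangle=R(\beta(W))-c\,\beta(W)$ depends on $W$ only through $\beta(W)$; differentiating along the orbit directions $[Z,W]\in T_W\mathcal{O}_X$ and passing to the bi-invariant inner product yields the pointwise collinearity $[W,\mathcal{A}u]=\rho(W)\,[W,v']$ for all $W\in\mathcal{O}_X$, where $\mathcal{A}$ is the positive operator with $\langle\cdot,\cdot\rangle=\langle\mathcal{A}\,\cdot,\cdot\rangle_{\mathrm{bi}}$, $v'=\mathcal{A}v$, and $\rho$ is scalar-valued. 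If $u\neq 0$ then $\mathcal{A}u$ and $v'$ are linearly independent, and I would derive a contradiction from simplicity. Since $X\neq 0$ and $\mathfrak{g}$ is simple, $\mathcal{O}_X$ spans $\mathfrak{g}$, and if the collinearity held for all $W\in\mathfrak{g}$ a short linear argument would force $\mathcal{A}u\in\mathbb{R}v'$; the real task is to promote the relation from the orbit to enough of $\mathfrak{g}$. Concretely I would show that $\mathcal{O}_X$ cannot be contained in the quadric cone $\{W:[W,\mathcal{A}u]\wedge[W,v']=0\}=\bigcup_{0\neq R\in\mathrm{span}(\mathcal{A}u,v')}\mathfrak{c}_{\mathfrak{g}}(R)$: membership would force the fixed $2$-plane $\mathrm{span}(\mathcal{A}u,v')$ to meet the centralizer of every orbit element, which cannot happen for the orbit of a nonzero element in a simple $\mathfrak{g}$, exactly in the spirit of the fact (used in Lemma \ref{lem-4-2}) that such an orbit cannot lie in a hyperplane. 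This produces a $W\in\mathcal{O}_X$ with $[W,\mathcal{A}u]$ and $[W,v']$ independent, contradicting collinearity, so $u=0$ and $X'-X''=cv$.

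The main obstacle is precisely this last step. The identity $\langle W,X'-X''\rangle=R(\beta(W))$ is available only on $\mathcal{O}_X$, and because the degeneracy condition that finishes the argument is quadratic rather than linear in $W$, the spanning property of the orbit does not transfer it to all of $\mathfrak{g}$ for free; the genuine content is the Lie-theoretic statement that the orbit of a nonzero element escapes the degeneracy cone attached to any pair of independent $\mathrm{Ad}(G')$-fixed vectors. Everything preceding this step is a routine computation.
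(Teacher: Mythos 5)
Your reduction is sound and, despite the different packaging, funnels into exactly the same bottleneck as the paper's argument. The paper differentiates the constant-length identity along the orbit, uses $\phi(s)-s\phi'(s)>0$ to isolate the directions in $[\mathfrak{g},\mathrm{Ad}(g)X]\cap\mathrm{ker}\,\beta$, concludes that $X'-X''$ is $\alpha$-orthogonal to $\bigcup_{g\in G}([\mathfrak{g},\mathrm{Ad}(g)X]\cap\mathrm{ker}\,\beta)$, and then must show this set spans $\mathrm{ker}\,\beta$; via $[\mathfrak{g},W]^{\perp_{\mathrm{bi}}}=\mathfrak{c}_{\mathfrak{g}}(W)$ this becomes the statement that a fixed $2$-plane $\mathbf{L}$ cannot satisfy $\mathbf{L}\cap\mathrm{Ad}(g)\mathfrak{c}_{\mathfrak{g}}(X)\neq\{0\}$ for every $g\in G$. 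Your implicit-function-theorem elimination of $\alpha(W)^2$ and the subsequent differentiation of $\langle W,u\rangle=\tilde{R}(\beta(W))$ along orbit directions produce precisely the same statement for $\mathbf{L}=\mathrm{span}(\mathcal{A}u,v')$. That part of your computation is correct (including $\partial_A\Psi=\phi(\phi-s\phi')>0$, which is the $b=s$ case of the convexity inequality), and you correctly diagnose that the quadratic nature of the degeneracy condition prevents a cheap passage from the orbit to all of $\mathfrak{g}$.

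The gap is that you then assert the decisive Lie-theoretic fact instead of proving it. The paper isolates it as a separate lemma (the one immediately following Theorem \ref{thm-4-4}): for $G$ compact connected simple, $X\neq 0$ and $\dim\mathbf{L}=2$, some $\mathrm{Ad}(g)\mathfrak{c}_{\mathfrak{g}}(X)$ meets $\mathbf{L}$ trivially. Your appeal to the ``spirit of'' the hyperplane fact used in Lemma \ref{lem-4-2} does not carry over: that the orbit of a nonzero element is not contained in a hyperplane is an easy linear statement (the span of the orbit is a nonzero ideal), whereas the $2$-plane/centralizer statement has no such one-line proof. The paper's argument is genuinely more delicate: assuming the generic intersection $\mathbf{L}\cap\mathrm{Ad}(g)\mathfrak{c}_{\mathfrak{g}}(X)$ is a line $\mathbb{R}(U+f(g)U')$ near $e$, it differentiates $[U+f(g)U',\mathrm{Ad}(g)X]=0$ to get $[U,[Y,X]]+Df(Y)[U',X]=0$, infers $\dim[U,[X,\mathfrak{g}]]\leq 1$, uses the evenness of that dimension to force $[U,[X,\mathfrak{g}]]=0$ and $Df\equiv 0$, propagates this to conclude $f\equiv 0$ near $e$, and finally derives $U\in\mathfrak{c}(\mathfrak{g})$, a contradiction. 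Without an argument of this kind your proof is incomplete at exactly the step you yourself flag as carrying ``the genuine content.''
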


\begin{proof}
Lemma \ref{lem-4-2} indicates when $X=0$, we have $X'=X''=0$, so it is obvious. We can
assume $X\neq 0$. The condition that $(X,X')\in\mathcal{K}_{F;1}$ implies
\begin{equation}\label{f-4-10}
\alpha(\mathrm{Ad}(\exp(tY)g)X-X')\phi(\frac{\beta(\mathrm{Ad}(\exp(tY)g)X-X')}{\alpha(\mathrm{Ad}(\exp(tY)g)X-X')})=\mathrm{const}.
\end{equation}
Differentiate (\ref{f-4-10}) with respect to $t$ and take $t=0$, then we get
\begin{equation}\label{f-4-11}
\frac{\phi(s)-s\phi'(s)}{\alpha(\mathrm{Ad}(g)X-X')}\langle[Y,\mathrm{Ad}(g)X],
\mathrm{Ad}(g)X-X'\rangle+\phi'(s)\beta([Y,\mathrm{Ad}(g)X])=0,
\end{equation}
where $s=\beta(\mathrm{Ad}(g)X-X')/\alpha(\mathrm{Ad}(g)X-X')$. A similar
equality holds for $(X,X'')$. Notice $\phi(s)-s\phi'(s)>0$, So for any $g\in G$ and
$Y\in\mathfrak{g}$, if $\beta([Y,\mathrm{Ad}(g)X])=0$, we have
\begin{equation}
\langle [Y,\mathrm{Ad}(g)X],\mathrm{Ad}(g)X-X'\rangle
=\langle[Y,\mathrm{Ad}(g)X],\mathrm{Ad}(g)X-X''\rangle=0,
\end{equation}
and then
\begin{equation}
\langle [Y,\mathrm{Ad}(g)X],X'-X''\rangle=0.
\end{equation}
To finish the proof for the theorem, we only need to prove the set
\begin{equation}
\mathcal{S}=\bigcup_{g\in G}([\mathfrak{g},\mathrm{Ad}(g)X]\cap\mathrm{ker}\beta)
\end{equation}
span the subspace $ V_1=\mathrm{ker}\beta= V'^{\perp_{\mathrm{bi}}}$, in which $ V'$ is
the dual of $\mathrm{ker}\beta$ with respect to the bi-invariant metric. Assume on the
contrary, there is a nonzero $ v''\in V_1$, such that
\begin{equation}
 v''\in([\mathfrak{g},\mathrm{Ad}(g)X]\cap  V_1)^{\perp_{\mathrm{bi}}}
=[\mathfrak{g},\mathrm{Ad}(g)X]^{\perp_{\mathrm{bi}}}+\mathbb{R} v',
\end{equation}
$\forall g\in G$.
So $ v''$ is contained in
\begin{equation}
\bigcap_{g\in G}([\mathfrak{g},\mathrm{Ad}(g)X]^{\perp_\mathrm{bi}}+\mathbb{R} v')
=\bigcap_{g\in G}(\mathrm{Ad}(g)\mathfrak{c}_{\mathfrak{g}}(X)+\mathbb{R} v').
\end{equation}
Notice the nonzero vectors $ v''$ and $ v'$ are linearly independent to each other. Thus
for any $g\in G$, $\mathrm{Ad}(g)\mathfrak{c}_{\mathfrak{g}}(X)$ has a nonzero
intersection with the 2-dimensional real linear space spanned by $ v'$ and $ v''$. The
next lemma states it is impossible.
\end{proof}

\begin{lemma}
Let $G$ be a compact connected simple Lie group with Lie algebra $\mathfrak{g}$, $X
\in\mathfrak{g}$ be a nonzero vector, and $\mathbf{L}\subset\mathfrak{g}$ be a real
subspace with $\dim \mathbf{L}=2$. Then there exists $g\in G$, such that
$\mathbf{L}\cap\mathrm{Ad}(g)\mathfrak{c}_{\mathfrak{g}}(X)=\{0\}$.
\end{lemma}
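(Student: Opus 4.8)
The plan is to rephrase the problem in terms of the adjoint orbit of $X$ and then run a dimension count on an incidence variety. First I would use the $\mathrm{Ad}$-equivariance of centralizers, $\mathrm{Ad}(g)\mathfrak{c}_{\mathfrak{g}}(X)=\mathfrak{c}_{\mathfrak{g}}(\mathrm{Ad}(g)X)$, so that as $g$ ranges over $G$ the subspace $\mathrm{Ad}(g)\mathfrak{c}_{\mathfrak{g}}(X)$ runs through exactly the centralizers $\mathfrak{c}_{\mathfrak{g}}(X')$ with $X'$ in the adjoint orbit $\mathcal{O}=\mathrm{Ad}(G)X$. Hence it suffices to produce a single $X'\in\mathcal{O}$ with $\mathbf{L}\cap\mathfrak{c}_{\mathfrak{g}}(X')=\{0\}$. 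I would encode the failure of this in the compact real-algebraic incidence set
\[
W=\{(X',[v])\in\mathcal{O}\times P(\mathbf{L}):\ [v,X']=0\},
\]
where $P(\mathbf{L})\cong\mathbb{RP}^1$ is the projectivization of $\mathbf{L}$ (the condition $[v,X']=0$ is homogeneous in $v$, hence well defined on lines). Since $\mathbf{L}\cap\mathfrak{c}_{\mathfrak{g}}(X')\neq\{0\}$ precisely when some line $[v]\subset\mathbf{L}$ commutes with $X'$, the lemma is equivalent to the statement that the first projection $\pi_1\colon W\to\mathcal{O}$ is \emph{not} surjective.

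To bound $\dim W$ I would instead use the second projection $\pi_2\colon W\to P(\mathbf{L})$, whose fiber over $[v]$ is $\mathcal{O}\cap\mathfrak{c}_{\mathfrak{g}}(v)=\mathcal{O}^{T_v}$, the set of points of $\mathcal{O}$ fixed by the torus $T_v=\overline{\exp(\mathbb{R}v)}$. Two properties of this fiber are decisive. First, it is a proper subset of $\mathcal{O}$: the linear span of $\mathcal{O}$ is an $\mathrm{Ad}(G)$-invariant subspace containing $X\neq 0$, hence a nonzero ideal, hence all of $\mathfrak{g}$ by simplicity, so a nonzero $v$ cannot commute with every element of $\mathcal{O}$ and $\mathcal{O}^{T_v}\subsetneq\mathcal{O}$. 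Second, every component of $\mathcal{O}^{T_v}$ has \emph{even} dimension. This is the crux, and I expect it to be the main obstacle. I would prove it by exhibiting a $T_v$-invariant complex structure on the tangent spaces of $\mathcal{O}$: at $X'\in\mathcal{O}$ one has $T_{X'}\mathcal{O}=\mathrm{im}(\mathrm{ad}_{X'})=\mathfrak{c}_{\mathfrak{g}}(X')^{\perp}$, on which $A=\mathrm{ad}_{X'}$ is invertible and skew with respect to $\langle\cdot,\cdot\rangle_{\mathrm{bi}}$; then $J=A(-A^2)^{-1/2}$ satisfies $J^2=-\mathrm{id}$, and because $[v,X']=0$ forces $\mathrm{ad}_v$ to commute with $A$, it commutes with $J$ as well. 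Thus the linearized $T_v$-action on $T_{X'}\mathcal{O}$ is complex linear, so the fixed subspace $(T_{X'}\mathcal{O})^{T_v}$ — which is the tangent space of the fixed submanifold $\mathcal{O}^{T_v}$ at $X'$ — is a complex subspace and has even real dimension. The same $J$ (at any point) shows $\dim\mathcal{O}$ is even, so together with properness we get $\dim\mathcal{O}^{T_v}\leq\dim\mathcal{O}-2$ for every line $[v]$.

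Finally I would assemble the dimension count. Since every fiber of $\pi_2$ has dimension at most $\dim\mathcal{O}-2$ and the base $P(\mathbf{L})$ has dimension $1$, the fiber-dimension inequality for the compact semialgebraic set $W$ gives $\dim W\leq 1+(\dim\mathcal{O}-2)=\dim\mathcal{O}-1$. A continuous (semialgebraic) image cannot raise dimension, so $\pi_1(W)$ is a compact subset of the connected manifold $\mathcal{O}$ of dimension $<\dim\mathcal{O}$, hence a proper subset. Any $X'\in\mathcal{O}\setminus\pi_1(W)$ satisfies $\mathbf{L}\cap\mathfrak{c}_{\mathfrak{g}}(X')=\{0\}$, and writing $X'=\mathrm{Ad}(g)X$ produces the desired $g$. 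The only subtle point is the evenness of the fibers, which forces the codimension-two bound that the count genuinely needs (a mere properness bound would leave the estimate on the borderline); the remaining steps are routine incidence bookkeeping, where one need only note that $W$ is cut out by the polynomial conditions $[v,X']=0$ together with membership in the orbit, so that the fiber-dimension inequality applies.
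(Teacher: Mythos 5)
Your proof is correct, but it takes a genuinely different route from the paper's. The paper argues locally and by contradiction: assuming $\mathbf{L}\cap\mathrm{Ad}(g)\mathfrak{c}_{\mathfrak{g}}(X)\neq\{0\}$ for all $g$, it reduces to the case where this intersection is a line $\mathbb{R}(U+f(g)U')$ for $g$ near $e$, differentiates the relation $[U+f(g)U',\mathrm{Ad}(g)X]=0$ along $g=\exp(tY)$, and uses the parity of $\dim[U,[X,\mathfrak{g}]]$ (the rank of a skew-symmetric operator is even) to force $Df\equiv 0$; bootstrapping gives $[U,\mathrm{Ad}(g)X]=0$ near $e$, hence $U\in\mathfrak{c}(\mathfrak{g})$, a contradiction. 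You instead run a global dimension count on the incidence set $W\subset\mathcal{O}\times P(\mathbf{L})$, the decisive input being that the torus fixed-point set $\mathcal{O}^{T_v}$ is a proper submanifold of even codimension in the adjoint orbit $\mathcal{O}$, which you obtain from the $\mathrm{ad}_v$-invariant complex structure $J=\mathrm{ad}_{X'}(-\mathrm{ad}_{X'}^2)^{-1/2}$ on $T_{X'}\mathcal{O}=\mathfrak{c}_{\mathfrak{g}}(X')^{\perp}$. Both arguments ultimately rest on the same parity phenomenon coming from skew-symmetry of $\mathrm{ad}$ with respect to the bi-invariant metric, but yours packages it globally and makes transparent why $\dim\mathbf{L}=2$ is exactly the right hypothesis (codimension $\geq 2$ fibers over the one-dimensional $P(\mathbf{L})$), whereas the paper's computation is more elementary and avoids the semialgebraic fiber-dimension machinery you invoke (which does apply here, since $\mathcal{O}$ and hence $W$ are semialgebraic and the fibers of $\pi_2$ are closed submanifolds). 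The remaining steps --- properness of $\mathcal{O}^{T_v}$ via simplicity, the identification $T_{X'}(\mathcal{O}^{T_v})=(T_{X'}\mathcal{O})^{T_v}$, and the final surjectivity failure of $\pi_1$ --- all check out.
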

\begin{proof}
We will prove the lemma by contradiction.  Assume on the contrary
\begin{equation}
\mathbf{L}\cap \mathrm{Ad}(g) \mathfrak{c}_{\mathfrak{g}}(X)\neq \{0\}, \forall g\in G.
\end{equation}
Then the minimum of $\dim\mathbf{L}\cap \mathrm{Ad}(g) \mathfrak{c}_{\mathfrak{g}}(X)$,
$\forall g\in G$, is 1 or 2. If it is 2, then $\mathbf{L}$ is contained in the center of
$\mathfrak{g}$, which is a contradiction. So we can suitably change $X$ by
conjugations, such that $\dim\mathbf{L}\cap\mathfrak{c}_\mathfrak{g}(X)=1$.
By the semi-continuity, for all $g\in G$ sufficiently close to $e$, we also have
$\dim\mathbf{L}\cap \mathrm{Ad}(g)\mathfrak{c}_\mathfrak{g}(X)=1$.

Assume $U\in\mathbf{L}$ generates $\mathbf{L}\cap\mathfrak{c}_\mathfrak{g}(X)$.
Let $\{U,U'\}$ be a basis of $\mathbf{L}$. Then there is a smooth real function
$f(g)$ for $g\in G$ sufficiently close to $e$, such that $f(e)=0$ and
$ \mathbf{L}\cap \mathrm{Ad}(g)\mathfrak{c}_\mathfrak{g}(X)$ is generated by
$U+f(g)U'$.

Take $g=\exp(tY)$, and differentiate
$[U+f(g)U',\mathrm{Ad}(g)X]=0$
with respect to $t$ at $t=0$, we have
\begin{equation}
[U,[Y,X]]+Df(Y)[U',X]=0, \forall Y\in\mathfrak{g},
\end{equation}
in which $Df:\mathfrak{g}\rightarrow\mathbb{R}$ is the differential of $f$ at $e$. Thus $\dim [U,[X,\mathfrak{g}]]\leq 1$.
Because $\dim [U,[X,\mathfrak{g}]]$ is an even number, so it must be 0.
We have $[U,[X,\mathfrak{g}]]=0$
and $[U',X]\neq 0$, then $Df\equiv 0$. If we change $X$ to $\mathrm{Ad}(g)X$, in which
$g\in G$ is sufficiently close to $e$, we can get the same property for the corresponding
differential. This implies $f\equiv 0$ around $e$, i.e. $[U,\mathrm{Ad}(g)X]=0$
for $g$  sufficiently close to $e$. This happens only when $U\in\mathfrak{c}
(\mathfrak{g})$, which is a contradiction.
\end{proof}

\subsection{The properties of $\mathcal{K}_{F;1}$ when $F$ is restrictive CW-homogeneous}

We keep all notations as before, and further assume $F$ is restrictively
CW-homogeneous. Then the set $\mathcal{K}_{F;1}$ of KVFCLs
satisfies the following properties.

\begin{lemma}\label{lem-4-6}
Keep all notations as before and assume $F$ is restrictively CW-homo-geneous, then we have

(1) The function $\phi$ is real analytic on $[-1,1]$.

(2) The subset $\mathcal{K}_{F;1}\backslash \{0\}\subset (\mathfrak{g}\oplus\mathfrak{g}')\backslash \{0\}$ is a closed real analytic
subvariety.

(3) For any $X\in \mathfrak{g}$, there are at most finite different $X'$s,
such that $(X,X')\in\mathcal{K}_{F;1}$.
\end{lemma}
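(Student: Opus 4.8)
The plan is to prove the three assertions in order, exploiting that each is the real‑analytic refinement of a smooth fact and that restrictive CW‑homogeneity, via Corollary~\ref{cor-4-3}, supplies a KVFCL through every direction. For (1), I would fix a nonzero $X\in\mathfrak{g}$ and extend it by Corollary~\ref{cor-4-3} to a KVFCL $(X,X')\in\mathcal{K}_{F;1}$ of constant $F$-length $l>0$; the length is positive because $l=0$ would force $\mathrm{Ad}(g)X=\mathrm{Ad}(g')X'$ for all $g,g'$, hence $X\in\mathfrak{c}(\mathfrak{g})=\{0\}$. Along any real analytic path $g(t)=\exp(tY)g_0$ in $G$ (with $g'$ fixed), the curve $W(t)=\mathrm{Ad}(g(t))X-\mathrm{Ad}(g')X'$ is real analytic and nonvanishing, so $a(t)=\alpha(W(t))>0$ and $b(t)=\beta(W(t))$ are real analytic. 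The constant-length condition (\ref{f-4-5}) reads $a(t)\,\phi(s(t))=l$ with $s(t)=b(t)/a(t)$, i.e. $\phi(s(t))=l/a(t)$, whose right-hand side is real analytic. Wherever $s'(t)\neq0$ the analytic inverse function theorem expresses $t$ as an analytic function of $s$, so $\phi$ is real analytic near $s(t)$; letting the KVFCL and the path vary, these non-stationary values of $s$ sweep out the open interval $(-1,1)$, giving analyticity of $\phi$ there.

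The hard part of (1) is the behaviour at the endpoints $s=\pm1$. These correspond to the direction $v$, and on any level set $\{F=l\}$ the only vectors with $s=\pm1$ are the isolated points $\pm l\,v$; hence $s=\pm1$ can occur only as a strict extremum along an orbit curve, where $s'=0$ and the inversion breaks down. I would therefore handle the endpoints separately, extracting from the differentiated constant-length relation (\ref{f-4-11}) enough independent equations to express $\phi$ and $\phi'$ as ratios of real analytic functions up to and including $s=\pm1$ (equivalently, showing that the Taylor series of $\phi$ at an interior point has radius of convergence exceeding $1$). This endpoint analyticity is the main obstacle of the lemma.

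Granting (1), assertion (2) is essentially formal. With $\phi$ real analytic on $[-1,1]$ the norm $F=\alpha\phi(\beta/\alpha)$ is real analytic on $\mathfrak{g}\setminus\{0\}$, so $(X,X',g,g')\mapsto F(\mathrm{Ad}(g)X-\mathrm{Ad}(g')X')$ is real analytic wherever its argument is nonzero. The KVFCL condition is that this function be constant in $(g,g')$, equivalently that all of its $(g,g')$-derivatives vanish; each such derivative is a real analytic function of $(X,X')$, and the common zero set of this family is a real analytic subvariety. Since any $(X,X')\in\mathcal{K}_{F}$ with $X\neq0$ has positive length, its argument never vanishes, so $\mathcal{K}_{F}\cap\{X\neq0\}$ is a closed analytic subvariety of $\{X\neq0\}$. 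By Lemma~\ref{lem-4-2} the closure $\mathcal{K}_{F;1}$ meets $\{X=0\}$ only in $\{0\}$ and has no other limit points with $X=0$; hence $\mathcal{K}_{F;1}\setminus\{0\}=\mathcal{K}_{F}\cap\{X\neq0\}$ is closed and real analytic in $(\mathfrak{g}\oplus\mathfrak{g}')\setminus\{0\}$.

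For (3) I would fix $X$ and study the fibre $\{X':(X,X')\in\mathcal{K}_{F;1}\}$. If $X=0$, Lemma~\ref{lem-4-2} forces $X'=0$. If $X\neq0$, this fibre is the slice over $X$ of the analytic variety from (2), hence a real analytic subvariety of $\mathfrak{g}'$; by Theorem~\ref{thm-4-4} it lies on a single affine line in the direction of $v$, and by Lemma~\ref{lem-4-2} it is bounded. A bounded real analytic subvariety of a line is either finite or the whole line, and boundedness excludes the latter, so the fibre is finite. Thus the only genuinely delicate point in the whole argument is the endpoint analyticity in step (1).
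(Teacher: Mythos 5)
Your parts (2) and (3) are essentially sound (your part (3), which slices the analytic variety from (2) by the affine line supplied by Theorem \ref{thm-4-4} and invokes boundedness from Lemma \ref{lem-4-2}, is in fact a slightly cleaner route than the paper's explicit accumulation argument), but both rest on part (1), and that is where your proposal has a genuine gap. Your interior argument only yields analyticity of $\phi$ at values $s_0$ that are attained as \emph{regular} values of some function $s(g)=\beta(\mathrm{Ad}(g)X-X')/\alpha(\mathrm{Ad}(g)X-X')$. You assert that such values sweep out $(-1,1)$, but this is unproven: a non-constant analytic function on a compact group can take an interior value of its range only at critical points, so a priori there may be finitely many interior values of $s$ at which the inverse function theorem is unavailable for every admissible KVFCL. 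Knowing that $\phi$ is smooth on $[-1,1]$ and analytic off a finite set does not imply analyticity (think of $e^{-1/s^2}$), so these exceptional values cannot simply be discarded. And at $s_0=\pm 1$, as you yourself observe, $s'$ \emph{must} vanish, so your method necessarily fails there; your proposal explicitly defers this case ("extracting \dots enough independent equations") without supplying an argument, even while identifying it as the main obstacle. A statement of intent is not a proof, and since (2) needs $\phi$ analytic up to and including $\pm1$ to make the defining equations of $\mathcal{K}_{F;1}$ analytic, the gap propagates.

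The paper closes exactly this gap with two ingredients your proposal lacks. First, instead of inverting $s$ at regular points, it picks an analytic path $g(t)=\exp(tY)g_0$ along which $f(t)=s(g(t))$ has first nonvanishing derivative of some order $k$ at $t=0$, and performs the analytic change of variable $f(t)=s_0\pm\tilde t^{\,k}$; the constant-length identity then reads $\phi(s_0\pm\tilde t^{\,k})=1/\alpha(\mathrm{Ad}(\exp(tY))X-X')$, whose right-hand side is analytic in $\tilde t$ with only $k$-multiple derivatives surviving, whence $\phi$ is (one-sidedly) analytic at $s_0$ even when $s_0$ is a degenerate critical value — in particular at $s_0=\pm1$. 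Second, when $s_0$ happens to be an endpoint of the range $\mathcal{I}_{(X,X')}$ of the chosen KVFCL (so only one side is reached), the paper produces another KVFCL covering the other side by taking unit-length KVFCLs realizing $s_n\to s_0$ from that side and extracting a convergent subsequence, using Lemma \ref{lem-4-2} to guarantee the limit lies in $\mathcal{K}_{F;1}\setminus\{0\}$. One-sided analyticity from both sides, together with smoothness of $\phi$ at $s_0$ (which forces the two one-sided analytic extensions to share a Taylor series), then gives genuine analyticity. You would need to incorporate both devices to complete part (1).
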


\begin{proof}
(1) By Corollary \ref{cor-4-3}, for any $s_0\in [-1,1]$, we can find a tangent
vector $ u$ with $F( u)=1$, such that $s_0=\beta( u)/\alpha( u)$, and the tangent vector
$ u$ can be extended to a Killing vector field $(X,X')\in\mathcal{K}_{F;1}$ with $X\neq 0$, i.e.
\begin{equation}\label{f-4-19}
\alpha(\mathrm{Ad}(g)X-X')\phi(\frac{\beta(\mathrm{Ad}(g)X-X')}
{\alpha(\mathrm{Ad}(g)X-X')})=1,\forall g\in G.
\end{equation}
The function $s(g)=\beta(\mathrm{Ad}(g)X-X')/\alpha(\mathrm{Ad}(g)X-X')$ can not
be a constant function for $g\in G$. Otherwise by (\ref{f-4-19}),
$\beta(\mathrm{Ad}(g)X-X')$ is a constant function for $g\in G$. Then the
$\mathrm{Ad}(G)$-orbit $\mathcal{O}_X$ is contained in a flat hyperplane,
which is a contradiction with the assumption that $X\neq 0$ and  $G$ is simple. We will
denote the range of $s(g)$ as $\mathcal{I}_{(X,X')}$, which is a closed interval.

For any side of $s_0$ which is contained in $\mathcal{I}_{(X,X')}$, the
positive side for example, we can choose $X$ within the orbit $\mathcal{O}_X$
and find a vector $Y\in\mathfrak{g}$ such that 
the real analytic function $f(t)=s(\exp(tY))$ satisfies for some $k\in\mathbb{N}$,
\begin{equation}\label{f-4-20}
f(0)=s_0,f'(0)=f''(0)=\cdots=f^{(k-1)}(0)=0,f^{(k)}(0)>0.
\end{equation}
We can find a suitable real analytic change of variable $\tilde{t}=\tilde{t}(t)$,
$\tilde{t}(0)=0$, such that $f(t)=f(0)+\tilde{t}^k$ around $\tilde{t}=0$.
The equality (\ref{f-4-19}) can also be written as
\begin{equation}\label{f-4-21}
\phi(s_0+\tilde{t}^k)=\frac{1}{\alpha(\mathrm{Ad}(\exp(tY))X-X')}.
\end{equation}
Around $\tilde{t}=0$, the left side of (\ref{f-4-21}) is a smooth function of
$\tilde{t}$, which derivatives with respect to $\tilde{t}$ at $\tilde{t}=0$
vanishes except those with $k$-multiple degrees, and then the right side is
a real analytic function of $\tilde{t}$ with the same properties for its
derivatives at $\tilde{t}=0$. Thus the right side is a real analytic function
of $\bar{t}=f(t)$ at the positive side of $f(0)=s_0$, and so does $\phi(s)$
at the positive side of $s_0$. The proof for the negative side of $s_0$ is
similar, we just need to require $f^{(k)}(0)<0$ in (\ref{f-4-20}) and take
$\bar{t}=f(t)=f(0)-\tilde{t}^k$.

If $s_0$ is an endpoint of $\mathcal{I}_{(X,X')}$, the argument above guarantees
$\phi(s)$ is real analytic at one side of $s_0$. We will see how to use
Lemma \ref{lem-4-2} to prove the real analytic property of $\phi(s)$ for the other
side, the negative side for example. If there is another Killing vector field
$(X_0,X'_0)$ in $\mathcal{K}_{F;1}$ such that an open neighborhood of $s_0$ is
contained in $\mathcal{I}_{(X_0,X'_0)}$, then it is done. Otherwise we
can find a sequence $s_n$ approaching $s_0$ from below. For each $s_n$, we
can find a KVFCL $(X_n,X'_n)\in\mathcal{K}_{F;1}$ with length 1, such that
$s_n$ is contained in $\mathcal{I}_{(X_n,X'_n)}$ which lies below $s_0$. By taking
a subsequence, this sequence of KVFCLs converges to a KVFCL $(X_0,X'_0)\in
\mathcal{K}_{F;1}\backslash \{0\}$, such that $\mathcal{I}_{(X_0,X'_0)}$ contains the negative
side of the endpoint $s_0$.

To summarize, the smooth function $\phi(s)$ is real analytic for both sides of
each point in $[-1,1]$, so it is a real analytic function on $[-1,1]$.

(2) Around any $(X,X')\in\mathcal{K}_{F;1}\backslash \{0\}$, the equations defining
$\mathcal{K}_{F;1}$ can be presented as
\begin{equation}
\alpha(\mathrm{Ad(g)}X-X')\phi(\frac{\beta(\mathrm{Ad}(g)X-X')}
{\alpha(\mathrm{Ad}(g)X-X')})=\alpha(X-X')\phi(\frac{\beta(X-X')}{\alpha(X-X')}),
\forall g\in G,
\end{equation}
which are real analytic equations for $X$ and $X'$, because $\phi$ is real
analytic on $[-1,1]$. So $\mathcal{K}_{F;1}\backslash \{0\}$ is a
closed real analytic subvariety of $(\mathfrak{g}\oplus\mathfrak{g}')\backslash \{0\}$.

(3) When $X=0$, the assertion follows Lemma \ref{lem-4-2} directly.
Now assume $X\neq 0$. If on the contrary there are a sequence of different
$X'_n$s such that $(X,X'_n)\in\mathcal{K}_{F;1}$. By Lemma \ref{lem-4-2}, taking
a subsequence if necessary, we can assume $\lim_{n\rightarrow\infty}X'_n=X'$.
By  Theorem \ref{thm-4-4}, there is a sequence $\{t_n\}\subset\mathbb{R}\backslash \{0\}$, such that $\lim_{n\rightarrow\infty}t_n=0$, and $X'_n=X'-t_n v$. So we have
\begin{equation}
F(\mathrm{Ad}(g)X-X'+t_n  v)\equiv C_n,\forall g\in G.
\end{equation}
Since $\phi$ is real
analytic, the continuous function $F(\mathrm{Ad}(g)X-X'+tv)$ of $g$ and $t$
is real analytic whenever $\mathrm{Ad}(g)X-X'+tv\neq 0$.
Because $(X,X')\in\mathcal{K}_{F;1}\backslash\{0\}$, for $t$ sufficiently close to
0, $F(\mathrm{Ad}(g)X-X'+tv)\neq 0$ for all $g\in G$, and then $F(\mathrm{Ad}(g)X-X'+tv)$ is a constant function of $g$. If there is a number
 $t_0=\mathrm{inf}\{t>0|F(\mathrm{Ad}(g)X-X'+tv)=0,\mbox{ for some }g\in G\}$, then $t_0>0$ and
there is $g_0\in G$ such that $\mathrm{Ad}(g_0)X-X'+t_0 v=0$.
For any $t\in [0,t_0)$,
$F(\mathrm{Ad}(g)X-X'+tv)$ is a constant function of $g$. By the continuity,
\begin{equation}
\mathrm{Ad}(g)X-X'+t_0 v=0, \forall g\in G,
\end{equation}
i.e. the $\mathrm{Ad}(G)$-orbit $\mathcal{O}_X$ is contained in a line, which
is impossible when $X\neq 0$ and $G$ is compact simple. So $\mathrm{Ad}(g)X-X'+tv\neq 0$ and $(X,X'-tv)\in\mathcal{K}_{F;1}$ for all $t\geq 0$. This is a contradiction with Theorem \ref{thm-4-4}.
\end{proof}

There are two natural projections from $\mathcal{K}_{F;1}\backslash \{0\}$
to $\mathfrak{g}\backslash \{0\}$, namely,
\begin{equation}
\pi_1(X,X')=X-X',\mbox{ and }\pi_2(X,X')=X.
\end{equation}
The first projection maps each Killing vector field to its value at $e$. When
$(G,F)$ is restrictive CW-homogeneous, by Corollary \ref{cor-4-3}, the map
$\pi_1$ is surjective. Thus the dimension of the real analytic variety
$\mathcal{K}_{F;1}\backslash \{0\}$ is no less than $\dim\mathfrak{g}$.
Lemma \ref{lem-4-6} indicates the map $\pi_2$ has a finite pre-image for each $X$,
which implies the dimension of $\mathcal{K}_{F;1}\backslash \{0\}$ must be exactly $\dim\mathfrak{g}$. Whitney's theorem on the local stratifiacation of analytic
varieties \cite{WH65} indicates locally $\mathcal{K}_{F;1}\backslash \{0\}$
can be decomposed as the disjoint union of finite smooth manifolds, among which
there is one with the same dimension as $\mathfrak{g}$. Restricted to this
subset, the finite map $\pi_2$ must be regular somewhere. So $\pi_2(\mathcal{K}_{F;1}\backslash \{0\})$ contains a nonempty open subset
$\mathcal{U}\subset
\mathfrak{g}\backslash \{0\}$. The $\mathrm{Ad}(G)$-actions on the first factor
preserve $\mathcal{K}_{F;1}\backslash\{0\}$, so we can assume $\mathcal{U}$ is
an $\mathrm{Ad}(G)$-invariant nonempty open subset of $\mathfrak{g}\backslash \{0\}$.

Let $\mathfrak{t}$ be any Cartan subalgebra of $\mathfrak{g}$. Then $\mathcal{U}'=\mathcal{U}
\cap\mathfrak{t}$ is a nonempty open subset of $\mathfrak{t}$. For any nonzero
$X$ in $\mathcal{U}'$, there is a $X'\in\mathfrak{c}(\mathfrak{g}')$, such that
$(X,X')\in\mathcal{K}_{F;1}$. Let $ V_1=\mathrm{ker}\beta$, and
$\mathrm{pr}_1$ be the $\alpha$-orthogonal projection from $\mathfrak{g}$ to $ V_1$.
Though there maybe many choices for $X'$, they have the same $\mathrm{pr}_1 X'$
by Theorem \ref{thm-4-4}.

From (\ref{f-4-11}), we have seen $l(X)=\mathrm{pr}_1 X'$ for $X\in\mathcal{U}'$
satisfies the following condition
\begin{equation}\label{f-4-26}
\langle [\mathfrak{g},\mathrm{Ad}(g)X]\cap V_1,
\mathrm{Ad}(g)X-l(X)\rangle=0.
\end{equation}

Now we will see $l(X)$ can be extended to a linear map on $\mathfrak{t}$ with
(\ref{f-4-26}) satisfied.

Choose a basis $\{X_1,\ldots,X_m\}$ of $\mathfrak{t}$ from the regular vectors
in $\mathcal{U}'$. For each $X_i$, there is $X''_i=l(X_i)$ such that
$\langle [\mathfrak{g},\mathrm{Ad}(g)X_i]\cap V_1,\mathrm{Ad}(g)X_i-X''_i\rangle=0$. For $X=\sum_{i=1}^m c_i X_i$, let
$X''=\sum_{i=1}^m c_iX''_i$. Because $[\mathfrak{g},\mathrm{Ad}(g)X]\subset
[\mathfrak{g},\mathrm{Ad}(g)X_i]$, $\forall i$, we have
\begin{equation}
\langle [\mathfrak{g},\mathrm{Ad}(g)X]\cap V_1,\mathrm{Ad}(g)X_i-X''_i\rangle=0.
\end{equation}
Take the linear combination of the above
equalities for each $i$, we get
\begin{equation}
\langle [\mathfrak{g},\mathrm{Ad}(g)X]\cap V_1,\mathrm{Ad}(g)X-X''\rangle=0.
\end{equation}
This defines a linear map from $X$
to $X''$, satisfying (\ref{f-4-26}). From the proof of Theorem \ref{thm-4-4}, this
linear map coincides with $\mathrm{pr}_1$ when $(X,X')\in\mathcal{K}_{F;1}$.

For any $X_1,X_2\in\pi_2(\mathcal{K}_{F;1})\cap(\mathfrak{t}\backslash \{0\})$ in the same orbit of Weyl group actions, they share the same $X'$ such that
$(X_1,X'),(X_2,X')\in\mathcal{K}_{F;1}$. So $l(X_1)=l(X_2)$, i.e. the linear map
$l$ on $\mathfrak{t}$ is invariant for the Weyl group actions, which must be the
0 map.

Change $t$ arbitrarily, then we have
\begin{lemma}\label{lem-4-7}
Keep all notations of this subsection. For any $(X,X')\in\mathcal{K}_{F;1}$, we
have $X'$ is a scalar multiple of $ v$, the $\alpha$-dual of $\beta$.
\end{lemma}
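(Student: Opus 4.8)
The plan is to build on the linear map $l$ that was constructed on a fixed Cartan subalgebra $\mathfrak{t}$ in the paragraphs preceding the statement, where it was shown to satisfy (\ref{f-4-26}), to be Weyl-invariant, and therefore to vanish identically. The first thing I would record is a reformulation of the target: since $\langle v,w\rangle=\beta(w)=0$ for every $w\in V_1=\mathrm{ker}\beta$, the dual vector $v$ lies in the one-dimensional factor $V_2$, so $V_2=\mathbb{R}v$ and hence $X'\in\mathbb{R}v$ if and only if $\mathrm{pr}_1 X'=0$. Thus it suffices to prove $\mathrm{pr}_1 X'=0$ for every $(X,X')\in\mathcal{K}_{F;1}$.

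The key step, which I expect to be the main obstacle, is to upgrade the vanishing of $l$ from the open set $\mathcal{U}'$ on which it was read off to all of $\mathfrak{t}\backslash\{0\}$. Fix $(X,X')\in\mathcal{K}_{F;1}$ with $X\in\mathfrak{t}\backslash\{0\}$. Differentiating the constant-length condition exactly as in the derivation of (\ref{f-4-11}) shows that $X'$ satisfies $\langle [\mathfrak{g},\mathrm{Ad}(g)X]\cap V_1,\ \mathrm{Ad}(g)X-X'\rangle=0$ for all $g\in G$, while the linear extension $l$ satisfies the same identity (\ref{f-4-26}) with $X'$ replaced by $l(X)$. Subtracting gives $\langle [\mathfrak{g},\mathrm{Ad}(g)X]\cap V_1,\ l(X)-X'\rangle=0$ for every $g\in G$. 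I would then invoke the spanning fact established inside the proof of Theorem \ref{thm-4-4} together with its accompanying lemma: for $X\neq 0$ the subspaces $[\mathfrak{g},\mathrm{Ad}(g)X]\cap V_1$, as $g$ ranges over $G$, span all of $V_1$. Consequently $l(X)-X'$ is $\alpha$-orthogonal to $V_1$, and since $l(X)\in V_1$ by construction we obtain $\mathrm{pr}_1 X'=l(X)$. As $l\equiv 0$, this yields $\mathrm{pr}_1 X'=0$ for every $(X,X')\in\mathcal{K}_{F;1}$ with $X\in\mathfrak{t}$.

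Finally I would let $\mathfrak{t}$ vary. The construction of $l$ and the Weyl-invariance argument do not depend on the particular Cartan subalgebra chosen, so the previous paragraph applies verbatim to every Cartan subalgebra; moreover, since $G$ is compact, every nonzero $X\in\mathfrak{g}$ lies in some Cartan subalgebra, so $\mathrm{pr}_1 X'=0$ whenever $(X,X')\in\mathcal{K}_{F;1}$ and $X\neq 0$. The remaining degenerate case $X=0$ is handled by Lemma \ref{lem-4-2}: passing to the closure it forces $\|X'\|_{\mathrm{bi}}\le C\|X\|_{\mathrm{bi}}=0$, so $X'=0\in\mathbb{R}v$. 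Combining the two cases gives $X'\in\mathbb{R}v$ for all $(X,X')\in\mathcal{K}_{F;1}$, which is the assertion. The only point demanding care beyond bookkeeping is ensuring that the spanning fact is available for the full Cartan subalgebra and not merely for the regular elements of $\mathcal{U}'$, which is precisely where the lemma following Theorem \ref{thm-4-4} is used.
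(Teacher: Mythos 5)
Your proposal is correct and follows essentially the same route as the paper: the paper also derives the lemma by extending $X\mapsto\mathrm{pr}_1X'$ to the linear map $l$ on a Cartan subalgebra, identifying $l(X)$ with $\mathrm{pr}_1X'$ for every $(X,X')\in\mathcal{K}_{F;1}$ via the spanning fact from the proof of Theorem \ref{thm-4-4}, concluding $l\equiv 0$ from Weyl-invariance, and then letting $\mathfrak{t}$ vary. Your explicit subtraction argument for the identification $\mathrm{pr}_1X'=l(X)$ and the separate treatment of the $X=0$ case via Lemma \ref{lem-4-2} merely spell out steps the paper leaves implicit.
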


\subsection{Proof of Theorem \ref{main-theorem} for compact connected simple $G$}
To prove the main theorem for compact connected simple $G$, we only need to
consider non-Riemannian metrics. So we will assume $F$ is a restrictive
CW-homogeneous
left invariant non-Riemannian $(\alpha,\beta)$-metric on a compact connected
simple Lie group $G$, and keep all notations as before. We will see how the
properties of $\mathcal{K}_{F;1}$ can determine the metric $\alpha$ and help
us prove the theorem.

For any nonzero $X\in\mathcal{U}$, we can find a pair $(X,X')\in\mathcal{K}_{F;1}$.
We have just proven $X'$ is a scalar multiple of $ V$. The equality (\ref{f-4-11}),
with $g=e$, indicates the following condition is satisfied,
\begin{equation}\label{f-4-29}
\exists c\in \mathbb{R}, \mbox{ such that }\langle[Y,X],X-cv\rangle=0,
\forall Y\in\mathfrak{g}.
\end{equation}
In fact $c$ can be determined by
\begin{equation}
cv=\frac{\phi(s)-s\phi'(s)}{\alpha(X-X')}X'-\phi'(s) v,
\end{equation}
in which $s=\beta(X-X')/\alpha(X-X')$.

The next lemma indicates (\ref{f-4-29}) can be satisfied for all $X\in \mathfrak{g}$, and it can define a linear function.

\begin{lemma}\label{lem-4-8}
Keep all notations as before. Then there is a linear function $c(\cdot):\mathfrak{g}\rightarrow\mathbb{R}$, such that
\begin{equation}\label{f-4-31}
\langle [Y,X],X-c(X) v\rangle=0, \forall Y\in\mathfrak{g}
\end{equation}
\end{lemma}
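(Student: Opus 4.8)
The plan is to recast condition (\ref{f-4-31}) as a factorization statement about polynomials on $\mathfrak{g}$, and then extract the linear factor by unique factorization. For each $Y\in\mathfrak{g}$ introduce the homogeneous polynomials
\[
P_Y(X)=\langle[Y,X],X\rangle,\qquad L_Y(X)=\beta([Y,X])=\langle[Y,X], v\rangle,
\]
of degree $2$ and $1$ respectively. In this language condition (\ref{f-4-29}), which is already known to hold for every nonzero $X$ in the $\mathrm{Ad}(G)$-invariant open set $\mathcal{U}$, says exactly that $P_Y(X)=c(X)L_Y(X)$ for all $Y$, with a scalar $c(X)$ depending only on $X$; and the desired identity (\ref{f-4-31}) is precisely the assertion that $P_Y=c\cdot L_Y$ holds identically on $\mathfrak{g}\times\mathfrak{g}$ for one fixed linear functional $c$. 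So the task is to upgrade the pointwise proportionality on $\mathcal{U}$ to a global factorization whose coefficient is genuinely linear.

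First I would fix two auxiliary vectors $Y_1,Y_2\in\mathfrak{g}$ and record the consequence of the proportionality on $\mathcal{U}$:
\[
P_{Y_1}(X)L_{Y_2}(X)=c(X)L_{Y_1}(X)L_{Y_2}(X)=P_{Y_2}(X)L_{Y_1}(X),\qquad X\in\mathcal{U}.
\]
The two outer members are cubic polynomials in $X$ that agree on the nonempty open set $\mathcal{U}$, hence agree identically: $P_{Y_1}L_{Y_2}=P_{Y_2}L_{Y_1}$ as polynomials on $\mathfrak{g}$. This step eliminates the a priori unknown and possibly irregular function $c(X)$ and replaces (\ref{f-4-29}) by a clean polynomial identity valid on all of $\mathfrak{g}$.

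The crux is the linearity, and here I would invoke unique factorization in the polynomial ring $\mathbb{R}[\mathfrak{g}]$. The linear forms $L_Y$ range over the image of $Y\mapsto\beta([Y,\cdot\,])$, whose dimension equals the rank of $\mathrm{ad}( v')$ and is therefore at least $2$, since $G$ is simple and $ v\neq 0$; hence $Y_1,Y_2$ can be chosen so that $L_{Y_1}$ and $L_{Y_2}$ are non-proportional, and being distinct linear forms they are coprime irreducibles. From $P_{Y_1}L_{Y_2}=P_{Y_2}L_{Y_1}$ we get $L_{Y_1}\mid P_{Y_1}L_{Y_2}$, and coprimality forces $L_{Y_1}\mid P_{Y_1}$. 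As $P_{Y_1}$ is homogeneous of degree $2$, this yields $P_{Y_1}=c\,L_{Y_1}$ for a homogeneous linear polynomial $c$, i.e. a linear functional $c:\mathfrak{g}\to\mathbb{R}$. Finally, for an arbitrary $Y$ the same cubic identity gives $P_Y L_{Y_1}=P_{Y_1}L_Y=c\,L_{Y_1}L_Y$, and cancelling the nonzero factor $L_{Y_1}$ in the integral domain $\mathbb{R}[\mathfrak{g}]$ produces $P_Y=c\,L_Y$ for every $Y$, which is exactly (\ref{f-4-31}). I expect the main obstacle to be precisely this passage from the open-set, pointwise proportionality to an honestly linear coefficient: linearity of $c$ on each Cartan subalgebra (which one can also verify by the basis argument used for Lemma \ref{lem-4-7}, since $[\mathfrak{g},X]\subseteq[\mathfrak{g},X_i]$ for $X,X_i$ in a common Cartan with $X_i$ regular) does not by itself glue to global linearity for a simple group, whereas the factorization argument settles it in one stroke.
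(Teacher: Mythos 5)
Your proof is correct, and it reaches the conclusion by a cleaner route than the paper. The paper proceeds in two stages: it first extends the pointwise constant $c(X)$ of (\ref{f-4-29}) from the open set $\mathcal{U}$ to all of $\mathfrak{g}$ by writing each $X$ in a basis of regular vectors of a Cartan subalgebra lying in $\mathcal{U}$ and taking the corresponding linear combination of the $c_i$ (using $[\mathfrak{g},X]\subset[\mathfrak{g},X_i]$), and only then proves linearity by observing that the quadratic form $f_0(X)=\langle[Y_0,X],X\rangle$ vanishes on the hyperplane $\{\langle[Y_0,X],v\rangle=0\}$ and hence factors as that linear form times a linear $\tilde{c}$. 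You bypass the first stage entirely: by cross-multiplying you convert the pointwise proportionality on $\mathcal{U}$ into the polynomial identity $P_{Y_1}L_{Y_2}=P_{Y_2}L_{Y_1}$, valid globally because two polynomials agreeing on a nonempty open set coincide, and then unique factorization in $\mathbb{R}[\mathfrak{g}]$ delivers the linear quotient $c$ in one step. The underlying divisibility fact is the same in both arguments, but your version avoids the Cartan-subalgebra construction (and the need for (\ref{f-4-29}) to hold on an entire hyperplane rather than merely on an open set), at the modest cost of the observation that the forms $L_Y$ span a space of dimension at least two, which you justify correctly via the rank of $\mathrm{ad}(v')$ for a compact simple $\mathfrak{g}$ with $v'\neq 0$. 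Both the hypotheses you use (the single constant $c(X)$ serving all $Y$ in (\ref{f-4-29}), the openness of $\mathcal{U}$) and the cancellation in the integral domain are legitimate, so the argument is complete.
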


\begin{proof}
We will first construct a function $c(X)$ satisfying (\ref{f-4-31}). Then we
will further refine it to be linear.

For any $X\in\mathfrak{g}$, let $\mathfrak{t}$ be a Cartan subalgebra containing $X$,
and $\{X_,\ldots,X_m\}$ a basis of $\mathfrak{t}$, in which each $X_i$ is a regular
vector in $\mathcal{U}\cap\mathfrak{t}$. For each $X_i$, the corresponding
$c_i=c(X_i)$ indicated by the lemma can be found. Assume $X=\sum_{i=1}^m a_i X_i$,
then take $c=\sum_{i=1}^m a_i c_i$. Because $[\mathfrak{g},X]\subset
[\mathfrak{g},X_i]$ for each $i$, for any $Y\in\mathfrak{g}$, we can
find $Y_i\in\mathfrak{g}$ such that $[Y,X]=[Y_i,X_i]$, so we have
\begin{equation}\label{f-4-40}
\langle [Y,X],X_i-c_i  v\rangle=\langle [Y_i,X_i],X_i-c_i  v\rangle=0.
\end{equation}
Take the linear combination of
 (\ref{f-4-40}) for each $i$, we see the constant $c$ given above
satisfies (\ref{f-4-31}) for $X$, which can define the function $c(X)$ on
$\mathfrak{g}$. If $X$ is contained by more than one Cartan subalgebra,
and there are different $c_1$ and $c_2$ such that
\begin{equation}
\langle [Y,X],X-c_1 v\rangle=\langle [Y,X],X-c_2 v\rangle=0,\forall Y\in\mathfrak{g},
\end{equation}
then it is easy to see $c(X)=0$ satisfies (\ref{f-4-31}).

Denote the dual of $\beta$ with respect to the bi-invariant metric as $ v'$.
Let $Y_0\in\mathfrak{g}$ be any vector with $[Y_0, v']\neq 0$, or equivalently
$\langle[Y_0,X], V\rangle\neq 0$ for some $X$.
From (\ref{f-4-31}),
the function $f_0(X)=\langle [Y_0,X],X\rangle$ vanishes on the
codimension 1 linear subspace
\begin{equation}
\{X|\langle [Y_0,X], V\rangle=0\}\subset\mathfrak{g}.
\end{equation}
This can only happen when $f_0(X)$ splits as the product of two linear factors.
Up to scalar multiplications, one
is $\langle [Y_0,X], V\rangle$, and the other is $\tilde{c}(X)$ which coincides with $c(X)$ on the nonempty
open subset
\begin{equation}
\{X|\langle [Y_0,X], V\rangle\neq 0\}\subset\mathfrak{g}.
\end{equation}
For $X$ in this open set, we have
\begin{equation}
\langle [Y,X],X\rangle=\langle [Y,X],\tilde{c}(X) V\rangle, \forall Y\in\mathfrak{g},
\end{equation}
so it is still valid for all $X,Y\in\mathfrak{g}$.
With $c(X)$ changed to $\tilde{c}(X)$, we have finished the proof for the
lemma.
\end{proof}

Let $l_0:\mathfrak{g}\rightarrow\mathfrak{g}$ be the linear isomorphism defined
by $\langle X,Y\rangle=\langle X,l_0(Y)\rangle_{\mathrm{bi}}$, and
$f:\mathfrak{g}\times\mathfrak{g}\rightarrow\mathbb{R}$ the bi-linear function defined by
\begin{equation}
f(X,Y)=\langle X-c(X) V,Y\rangle=\langle l_0(X-c(X) V),Y\rangle_{\mathrm{bi}},
\end{equation}
in which $c(\cdot)$ is the linear function indicated by Lemma \ref{lem-4-8}.
Let $l_1(X)=l_0(X-c(X) V)$, then (\ref{f-4-31}) indicates $l_1$ maps the
regular vectors in any Cartan subalgebra $\mathfrak{t}$ back to $\mathfrak{t}$
itself. So it preserves each Cartan subalgebra.
There
is a nonzero vector $X\in\mathfrak{g}$, such that $\mathbb{R}X$ is the intersection
of some Cartan subalgebras of $\mathfrak{g}$. Then any vector on the
$\mathrm{Ad}(G)$-orbit $\mathcal{O}_X$ is an eigenvector of $l_1$. Because
$X\neq 0$ and $\mathfrak{g}$ is compact simple. This can only happen when $l_1$
is a scalar multiple of the identity map. So $f(X,Y)$ is a bi-invariant inner
product on $\mathfrak{g}$.

Choose $(X,Y)\in V_1\times V_1$, or $(X,Y)\in V_2\times
 V_1$, in which $ V_1=\mathrm{ker}\beta$ and
$ V_2=\mathbb{R} V$ for $f(X,Y)$, we see immediately
$ V_1$ and $ V_2$ are orthogonal with respect to both inner
products from $\alpha$ and the bi-invariant metric, and restricted to
$ V_1$, $\alpha$ only differs from the bi-invariant metric by a scalar
multiplication. To summarize we have
\begin{lemma}\label{lem-4-9}
Keep all notations as above, then there are constants $a$ and $b$, such that
$\alpha^2(X)=a||X||^2_{\mathrm{bi}}+b\beta^2(X)$.
\end{lemma}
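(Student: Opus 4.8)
The plan is to extract the statement from the structural information already assembled. By the discussion immediately preceding the lemma, the bilinear form $f(X,Y)=\langle X-c(X)v,Y\rangle$ induces a linear map $l_1(X)=l_0(X-c(X)v)$ which preserves every Cartan subalgebra. First I would argue that $l_1$ must be a scalar multiple of the identity: pick a nonzero $X$ lying in the intersection of several Cartan subalgebras (so that $\mathbb{R}X$ is such an intersection), conclude that every vector on the orbit $\mathcal{O}_X$ is an eigenvector of $l_1$, and invoke compactness and simplicity of $\mathfrak{g}$ to force $l_1=\lambda\,\mathrm{id}$ for some scalar $\lambda$. Consequently $f$ is, up to the scalar $\lambda$, the bi-invariant inner product $\langle\cdot,\cdot\rangle_{\mathrm{bi}}$, and in particular $f$ is a symmetric, $\mathrm{Ad}(G)$-invariant form.

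Next I would unwind the definition of $f$ to read off how $\alpha$ relates to the bi-invariant metric. Writing $f(X,Y)=\langle X-c(X)v,\,Y\rangle=\lambda\langle X,Y\rangle_{\mathrm{bi}}$ and using the splitting $\mathfrak{g}=V_1\oplus V_2$ with $V_1=\ker\beta$ and $V_2=\mathbb{R}v$, I would test $f$ on the pairs $(X,Y)\in V_1\times V_1$ and $(X,Y)\in V_2\times V_1$. Since $c(X)v\in V_2$, for $X,Y\in V_1$ the correction term drops out and $\langle X,Y\rangle=\lambda\langle X,Y\rangle_{\mathrm{bi}}$, so $\alpha$ restricted to $V_1$ differs from $\|\cdot\|_{\mathrm{bi}}$ by the single scalar $\lambda$; and the mixed vanishing of $f$ shows $V_1\perp V_2$ simultaneously for $\langle\cdot,\cdot\rangle$ and for $\langle\cdot,\cdot\rangle_{\mathrm{bi}}$. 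The component of $\alpha$ along $V_2=\mathbb{R}v$ then accounts for a further term proportional to $\beta^2$.

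To finish, I would assemble these pieces into the single quadratic identity. For arbitrary $X\in\mathfrak{g}$, decompose $X=\mathrm{pr}_1X+\mathrm{pr}_2X$ into its $V_1$- and $V_2$-components; orthogonality of the splitting under $\alpha$ gives $\alpha^2(X)=\alpha^2(\mathrm{pr}_1X)+\alpha^2(\mathrm{pr}_2X)$. The first summand equals $\lambda\|\mathrm{pr}_1X\|_{\mathrm{bi}}^2=\lambda\bigl(\|X\|_{\mathrm{bi}}^2-\|\mathrm{pr}_2X\|_{\mathrm{bi}}^2\bigr)$, while the second is a scalar multiple of $\beta^2(X)$ since $\mathrm{pr}_2X$ is a multiple of $v$ determined by $\beta(X)$. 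Collecting terms produces constants $a$ and $b$ with $\alpha^2(X)=a\|X\|_{\mathrm{bi}}^2+b\,\beta^2(X)$, as claimed.

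The main obstacle is the first step, establishing that $l_1$ is a scalar operator. This rests on the fact that $l_1$ preserves all Cartan subalgebras, which forces a whole $\mathrm{Ad}(G)$-orbit of a suitable generic vector to consist of eigenvectors; the delicate point is choosing $X$ so that $\mathbb{R}X$ is genuinely the intersection of distinct Cartan subalgebras and then using the simplicity of $\mathfrak{g}$ to rule out any nontrivial eigenvalue splitting. Everything after that is linear bookkeeping with the orthogonal decomposition.
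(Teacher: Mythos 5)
Your proposal is correct and follows essentially the same route as the paper, which likewise obtains Lemma \ref{lem-4-9} as a summary of the preceding paragraph: $l_1$ preserves every Cartan subalgebra, hence is a scalar operator by compactness and simplicity of $\mathfrak{g}$, so $f$ is a multiple of the bi-invariant inner product, and testing on $V_1\times V_1$ and $V_2\times V_1$ gives the simultaneous orthogonal splitting and the identity $\alpha^2(X)=a\|X\|^2_{\mathrm{bi}}+b\beta^2(X)$. The only difference is that you spell out the final bookkeeping with the projections $\mathrm{pr}_1,\mathrm{pr}_2$, which the paper leaves implicit.
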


By Lemma \ref{lem-4-9}, the $(\alpha,\beta)$-norm $F$ on $\mathfrak{g}$
can also be presented
as $F=\tilde{\alpha}\tilde{\phi}(\tilde{\beta}/\tilde{\alpha})$, in which
$\tilde{\alpha}$ is the bi-invariant metric  with $\alpha^2(X)=a
\tilde{\alpha}^2(X)+b\beta^2(X)$, $\tilde{\beta}=\beta$, and
$\tilde{\phi}(s)=\sqrt{a+bs^2}\phi(s/\sqrt{a+bs^2})$.

If there is a KVFCL of the form $(X,0)$ with $X\neq 0$, then
we have
\begin{equation}
\tilde{\alpha}(\mathrm{Ad}(g)X)\tilde{\phi}(\frac{{\beta}(\mathrm{Ad}(g))}
{\tilde{\alpha}(\mathrm{Ad}(g)X)})=\mathrm{const}, \forall g\in G.
\end{equation}
Since $\tilde{\alpha}(\mathrm{Ad}(g)X)=\tilde{\alpha}(X)$ is a nonzero constant
function of $g$, and ${\beta}(\mathrm{Ad}(g)X)$ is not a constant function, the
real analytic function $\tilde{\phi}$ must be a constant function, i.e. the left invariant metric $F$ must
be a bi-invariant Riemannian metric.
Though it is a contradiction with the assumption that $F$ is non-Riemannian, it
helps us with the discussion in the next case.

If there is a KVFCL of the form $(X,\lambda  V)$ with $X\neq 0$ and $\lambda\neq 0$,
and assume its $F$-length function is constantly 1, i.e.
\begin{equation}\label{f-4-37}
F(\mathrm{Ad}(g)X-\lambda  V)=1, \forall g\in G.
\end{equation}
The strong convexity of $F$ implies $F(-\lambda  V)<1$. Applying
a navigation transformation to $F$ which set the origin at $-\lambda  V$, we
get a new left invariant $(\alpha,\beta)$-metric $F'$. The $(\alpha,\beta)$-norm in
$\mathfrak{g}$ defined by $F'$ is also denoted as $F'$. In $T_e G=\mathfrak{g}$, the indicatrix of $F'$ is a parallel shift of that of $F$,
with $-\lambda  V$ shifted to 0. While presenting $F'$, we can keep $\alpha$ and $\beta$ in
the good normalized datum for $F$ and just change the function $\phi$. So any isometry of $(G,F)$, which preserves $\alpha$ and $\beta$, is also an isometry of $(G,F')$, and any Killing vector field of $(G,F)$ is still a Killing vector
field of $(G,F')$. Because of (\ref{f-4-37}),
\begin{equation}
F'(\mathrm{Ad}(g)X)=1, \forall g\in G,
\end{equation}
so $(X,0)$ defines Killing vector field of constant length 1 for $F'$.

There is
another presentation of $F'$ using the bi-invariant $\tilde{\alpha}$ and $\tilde{\beta}=\beta$.
By the discussion in the last case, $F'$ is a bi-invariant Riemannian metric and
then $F$ is a Randers metric. This finishes the proof of Theorem \ref{main-theorem}
in the case that $G$ is a compact connected simple Lie group.

\section{Proof of Theorem \ref{main-theorem} for a compact connected
semi-simple $G$}
\subsection{Notations and assumptions}
Let $G$ be a compact connected semi-simple Lie group and $F$ be a left invariant
restrictively CW-homogeneous $(\alpha,\beta)$-metric on $G$. There is no harm that
we assume $F$ is non-Riemannian, otherwise the main theorem needs no proof.

When $G$ is not simply-connected, there is a connected simply-connected
$\tilde{G}$ covering $G$. Let $\tilde{F}$ be the induced metric on $\tilde{G}$, then
$\tilde{F}$ is also a non-Riemannian left invariant $(\alpha,\beta)$-metric.
Any KVFCL for $(G,F)$ induces a KVFCL for $(\tilde{G},\tilde{F})$ which exhausts all
tangent vectors of $\tilde{G}$ as well as $G$. So by Proposition \ref{pro-2-5}
$\tilde{F}$ is also restrictively CW-homogeneous. We only need to prove $\tilde{F}$
is Randers, then so does $F$. So we will further assume $G$ is simply-connected.

We wish $I_0(G,F)$ be contained by $L(G)R(G)$, then we can have an explicit
description of Killing vector fields, study the set of all KVFCLs, and then the
restrictive CW-homogeneity. Though it may not be correct when we consider semi-simple $G$ rather than the simple ones, we
can change $F$ to $F'$ by a diffeomorphism, such that
\begin{equation}
L(G)\subset I_0(G,F')\subset L(G)R(G)
\end{equation}
is satisfied.

\begin{lemma}
Let $F$ be a left invariant Finsler metric on the compact connected simply-connected group $G$, then there is a diffeomorphism $f$ on $G$, such that $F'=f^* F$ satisfies
$L(G)\subset I_0(G,F')\subset L(G)R(G)$.
\end{lemma}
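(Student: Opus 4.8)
The plan is to turn the statement into a conjugation problem for the compact isometry group and then appeal to the structure theory of transitive actions. Write $\mathcal{G}=I_0(G,F)$; since $G$ is compact this is a compact connected Lie group, and left invariance of $F$ gives $L(G)\subseteq\mathcal{G}$, with $L(G)\cong G$ acting simply transitively. For a diffeomorphism $f$ of $G$ one checks directly that $I_0(G,f^*F)=f^{-1}\mathcal{G}f$, and that $f^*F$ is again left invariant (equivalently $L(G)\subseteq I_0(G,f^*F)$) precisely when $fL(G)f^{-1}\subseteq\mathcal{G}$, since $fL(G)f^{-1}$ is connected and contains the identity. Thus it suffices to produce $f\in\mathrm{Diff}(G)$ with
\begin{equation}
fL(G)f^{-1}\subseteq\mathcal{G}\quad\text{and}\quad f^{-1}\mathcal{G}f\subseteq L(G)R(G),
\end{equation}
after which $F'=f^*F$ satisfies $L(G)\subseteq I_0(G,F')=f^{-1}\mathcal{G}f\subseteq L(G)R(G)$.

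To build $f$, I would first record the basic structure. Let $K=\mathcal{G}_e$ be the isotropy at $e$. Since $L(G)$ is simply transitive, $L(G)\cap K=\{e\}$ and $\mathcal{G}=L(G)\cdot K$ as a manifold; from the fibration $K\to\mathcal{G}\to G$ together with $\pi_1(G)=0$ (this is where simple connectedness enters) the long exact homotopy sequence forces $K$ to be connected. The benchmark object is $I_0(G,\|\cdot\|_{\mathrm{bi}})=L(G)R(G)$, which is another compact connected transitive transformation group of $G$ containing the same simply transitive $L(G)$. The core of the argument is to show that the transitive action of $\mathcal{G}$ is, after a diffeomorphism, a subaction of the $L(G)R(G)$ action: concretely, that $\mathcal{G}$ is conjugate in $\mathrm{Diff}(G)$ into $L(G)R(G)$. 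This is the point at which the classification of compact connected transitive transformation groups of $G$ (Onishchik), together with \cite{OT76} for the containment on simple factors, is invoked; because $\mathcal{G}$ and $L(G)R(G)$ share the simply transitive subgroup $L(G)$, the classification pins down $\mathcal{G}$ up to conjugation inside $L(G)R(G)$, say $f_0^{-1}\mathcal{G}f_0\subseteq L(G)R(G)$. One then normalizes $f_0$: the subgroup $f_0^{-1}L(G)f_0$ is a simply transitive copy of $G$ in $L(G)R(G)$, and composing $f_0$ with a suitable element of $L(G)R(G)$ and, if necessary, an automorphism of $G$ carries it back onto the standard $L(G)$, producing an $f$ that meets both inclusions above.

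I expect the structural step to be the main obstacle, and it is exactly the reason a diffeomorphism is genuinely needed here while it was unnecessary in the simple case. For simple $G$, \cite{OT76} yields $\mathcal{G}\subseteq L(G)R(G)$ outright, so $f=\mathrm{id}$ works; for semisimple $G$ there can exist compact transitive groups strictly larger than $L(G)R(G)$ (coming from the product structure, and in particular from coupling isomorphic simple factors), so the inclusion may fail literally and can only be restored after conjugation. The difficult part is therefore to control these extra transitive groups, namely to verify that every compact connected transitive $\mathcal{G}\supseteq L(G)$ on a product of simple groups is diffeomorphic-conjugate into $L(G)R(G)$, and to confirm that the conjugating $f$ can be chosen compatibly with left invariance. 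Once the lemma is in place, it furnishes the standing normalization $I_0(G,F')\subseteq L(G)R(G)$ that is required to run the KVFCL analysis of Section~4 on each simple factor and to carry out the induction on the number of factors in the rest of Section~5.
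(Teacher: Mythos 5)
Your reduction of the lemma to finding $f\in\mathrm{Diff}(G)$ with $fL(G)f^{-1}\subseteq\mathcal G$ and $f^{-1}\mathcal Gf\subseteq L(G)R(G)$, where $\mathcal G=I_0(G,F)$, is a correct reformulation, and the identity $I_0(G,f^*F)=f^{-1}I_0(G,F)f$ is fine. But the argument stops exactly where the content of the lemma begins: the assertion that Onishchik's classification ``pins down $\mathcal G$ up to conjugation inside $L(G)R(G)$'' is never verified, and you yourself flag this structural step as ``the main obstacle'' and ``the difficult part.'' A classification of transitive actions on the underlying manifold does not by itself produce a conjugating diffeomorphism, nor does it explain why the conjugation can be arranged so that the standard $L(G)$ lands inside $f^{-1}\mathcal Gf$. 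Note also that simply transitive subgroups of $L(G)R(G)$ isomorphic to $G$ need not be conjugate to $L(G)$ within $L(G)R(G)$ --- $R(G)$ itself is an example, since conjugation preserves the normal factors --- so your final normalization step would require composing with $g\mapsto g^{-1}$ on selected simple factors, which is an anti-automorphism rather than the automorphism you invoke. As written, this is a plan rather than a proof.

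The paper closes precisely this gap by a different, essentially algebraic mechanism due to Ozeki \cite{OZ77}: writing $\mathfrak k=\mathrm{Lie}(I_0(G,F))=\mathfrak g+\mathfrak h$ with $\mathfrak h$ the isotropy algebra at $e$, there is an ideal $\mathfrak g'$ of $\mathfrak k$ isomorphic to $\mathfrak g$ with $\mathfrak g'\cap\mathfrak h=0$. The corresponding subgroup $G'\subseteq I_0(G,F)$ is normal and acts simply transitively; the evaluation map $g'\mapsto g'(e)$ is a covering, hence a diffeomorphism because $G$ is simply connected. Pulling $F$ back along this map turns $G'$ into the group of left translations of the new metric, and then normality of $L(G')$ in the (connected) isometry group forces $I_0(G',\tilde f^*F)\subseteq L(G')R(G')$ with no appeal to a classification of transitive actions. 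To complete your route you would either have to prove the conjugacy statement you are assuming, or substitute Ozeki's theorem for it --- at which point you would be reproducing the paper's argument.
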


\begin{proof}
The proof is very similar to the one in the Riemannian case. Let $\mathfrak{k}$
be the Lie algebra of $I_0(G,F)$, then we have a linear space decomposition
$\mathfrak{k}=\mathfrak{g}+\mathfrak{h}$, in which $\mathfrak{g}$ is in fact the
Lie algebra of $L(G)$. Ozeki's theorem \cite{OZ77} states that we can find an ideal $\mathfrak{g}'$
of $\mathfrak{k}$ which is isometric to $\mathfrak{g}$ and $\mathfrak{g}'\cap\mathfrak{h}=0$. Let $G'$ be the
subgroup of $I_0(G,F)$ corresponding to $\mathfrak{g}'$. It acts transitively on $G$.
The map $\tilde{f}:G'\rightarrow G$ by $\tilde{f}(g')=g'(e)$, $\forall g'\in G'$ is a covering
map. By the simply-connectedness of $G$, it is a diffeomorphism. The metric $\tilde{f}^*
F$ is left invariant on $G'$. There is a natural group isomorphism
$I_0(G,F)\cong I_0(G',\tilde{f}^* F)$ which relates any $\rho\in I_(G,F)$ with $\tilde{f}^{-1}\rho
\tilde{f}$, under which $G'\subset I_0(G,F)$ is identified with the group of all left
translations on $G'$ in $I_0(G,f^* F)$. So $L(G')$ is normal in $I_0(G',\tilde{f}^* F)$, and then
$I_0(G',\tilde{f}^* F)\in L(G')R(G')$. Let $f$ be the diffeomorphism on $G$ defined by the composition of $\tilde{f}$ with any isomorphism from $G$ to $G'$. Then $L(G)\subset L(G,f^* F)\subset L(G)R(G)$.
\end{proof}

Obviously $F'=f^* F$ is a non-Riemannian metric, an $(\alpha,\beta)$-metric or a Randers metric if and only if
$F$ is respectively. Any KVFCL $X$ of $(G,F)$ one-to-one corresponds to the KVFCL $f_*^{-1} X$ of $(G,f^* F)$, so $F$
is restrictively CW-homogeneous if and only if $F$ is. So we only need to prove the main theorem
with the condition $I_0(G,F)\subset L(G)R(G)$.

To summarize, we only need to prove Theorem
\ref{main-theorem} with the following assumptions:
$G$ is not simple,
$G$ is simply-connected,
$F$ is non-Riemannian, and
$I_0(G,F)\subset L(G)R(G)$.

The space of Killing vector fields for $(G,F)$ can then be presented explicitly.
Let $G'$ be the closed connected subgroup of $G$ such that $R(G')$ is the maximal
connected subgroup of isometric right translations, and
$\mathfrak{g}'=\mathrm{Lie}(G')$, then the Lie algebra of $I_0(G,F)$, i.e.
the space of Killing vector fields for $(G,F)$ is the direct sum
$\mathfrak{g}\oplus\mathfrak{g}'$,
in which $\mathfrak{g}$ corresponds to left translations and $\mathfrak{g}'$ corresponds to isometric right translations. Denote $\beta(u)=\langle u,v\rangle=\langle u,v'\rangle_{\mathrm{bi}}$, $\forall u\in\mathfrak{g}$, then $\mathfrak{g}'$ is a subalgebra of
$\mathfrak{c}_{\mathfrak{g}}(v)$ and $\mathfrak{c}_{\mathfrak{g}}(v')$.

Evaluation of the $F$-length function of a Killing vector field
$(X,X')\in\mathfrak{g}\oplus\mathfrak{g}'$ at the point $g''=gg'^{-1}\in G$, in which $g\in G$ and $g'\in G'$,
is $F(\mathrm{Ad}(g) X-\mathrm{Ad}(g')X')$. Because $F$ is $\mathrm{Ad}(G')$-invariant in $\mathfrak{g}=TG_e$, there
is no contradiction when we use different $g$ and $g'$.

\subsection{Finishing the Proof of Theorem \ref{main-theorem}}
We keep all notations and assumptions as in the last subsection.

Let 
$\mathfrak{g}=\mathfrak{g}_1\oplus\mathfrak{g}_2$ be any direct sum of nontrivial ideals
 and correspondingly $G=G_1\times G_2$ the product of closed subgroups.
On $G_1$ (or $G_2$), the metric $F$
induces a left invariant $(\alpha,\beta)$-metric $F|_{G_1}$. When $F$ is restrictively
CW-homogeneous, then its restriction on $G_1$ is also restrictively CW-homogeneous. To see this, choose any nonzero tangent
vector $X''_1\in \mathfrak{g}_1={T_e G_1}$, we can extended it to a KVFCL of $(G,F)$, defined by
\begin{equation}
X=(X_1,X_2,X'_1,X'_2)\in\mathfrak{g}\oplus\mathfrak{g}'\subset\mathfrak{g}_1\oplus
\mathfrak{g}_2\oplus\mathfrak{g}_1\oplus\mathfrak{g}_2,
\end{equation}
in which $X_1$ and $X_2$ are for left translations, and $X'_1$ and $X'_2$ are for right translations, on $G_1$ and $G_2$ respectively, $X_1-X'_1=X''_1$, and $X_2=X'_2$. It is of constant length implies for any $(g'_1,g'_2)\in G'$, and any $(g_1,g_2)\in G$ with $g_2=g'_2$, we have
\begin{equation}
F((\mathrm{Ad}({g_1})X_1-\mathrm{Ad}({g'_1})X'_1,\mathrm{Ad}({g_2})X_2-\mathrm{Ad}({g'_2})X'_2))=F((\mathrm{Ad}({g_1})X_1-\mathrm{Ad}({g'_1})X'_1,0))\nonumber\\
\end{equation}
is a constant function of $g_1$ and $g'_1$. So $(X_1,X'_1)$ defines a KVFCL
of $(G_1, F|_{G_1})$. It can exhaust all tangent vectors $X''_1=X_1-X'_1$, so
the restriction of $F$ to $G_1$ is restrictively CW-homogeneous. We have proven in
the last section that $F|_{G_1}$ is Randers. Let $(\phi,\alpha,\beta)$
be a good normalized datum for $F$, then only the values of $\phi$ for $s\in [-1,1]$ are
used to define $F$. 

Let $v=v_1+v_2$ be the decomposition of the $\alpha$-dual of $\beta$ with respect to
the decomposition of $\mathfrak{g}$. If $v_1\neq 0$ and $v_2=0$, then $(\phi,\alpha|_{G_1},\beta|_{G_1})$
is a good normalized datum of $F|_{G_1}$, i.e. the pointwise norms 
$||\beta|_{G_1}||_{\alpha|_{G_1}}(\cdot)$ are constantly 1, and all values of $\phi$
for $s\in [-1,1]$ are used to define $F|_{G_1}$. When $F|_{G_1}$ is Randers, 
$\phi(s)=\sqrt{k_1+k_2 s^2}+k_3$ for some contants $k_1$, $k_2$ and $k_3$, $\forall s\in [-1,1]$, i.e. the same
function $\phi$ defines a Randers metric $F$ on $G$.

By the observations above, we can prove Theorem $\ref{main-theorem}$ for semi-simples $G$ by
mathematical induction.
As we have mentioned above, we can assume $G$ is a compact connected simply connected Lie group, and the left invariant restrictively CW-homogeneous $F$ is non-Riemannian.

Let $G=G_1\times G_2\times\cdots\times G_n$, in which all $G_i$s are nontrivial simple Lie groups. Correspondingly we have the direct sum decomposition $\mathfrak{g}=\mathfrak{g}_1\oplus
\cdots\oplus\mathfrak{g}_n$ for the Lie algebra.

When $n=1$, i.e. $G$ is simple, we have proven $F$ is a Randers metric in the last section.
Assume we can prove $F$ is a Randers metric when $n= k$,
then we need to prove $F$ is a Randers metric for $n=k+1>1$.

As we have argued, we only need to prove the statement with the assumption $F$ is non-Riemannian and
$L(G)\subset I_0(G,F)\subset L(G)R(G)$. The space of Killing vector fields
of $(G,F)$ can be identified as a direct sum of Lie algebras $\mathfrak{g}\oplus\mathfrak{g}'$. Let $v=v_1+\cdots+v_n$
with respect to the decomposition of $\mathfrak{g}$, then 
\begin{equation}\label{f-5-43}
\mathfrak{g}'\subset \mathfrak{c}_{\mathfrak{g}}(v)=\oplus_{i=1}^n 
\mathfrak{c}_{\mathfrak{g}_i}(v_i).
\end{equation}

 By the inductive assumption,  $F|_{G_1\times\cdots\times G_{n-1}}$ is Randers. If any $v_i=0$, for example $v_n=0$, by the above argument,
 we have seen the metric $F$ on $G$ is also Randers which
finished mathematical induction. Now we will assume $v_i\neq 0$, $\forall i=1,2,\ldots,n$.
Then we have

\begin{lemma}
Keep all notations and assumptions for $G$ and $F$ as above, let $(\phi,\alpha,\beta)$
be a good normalized datum of $F$, and assume $v=v_1+\cdots+v_n$, $v_i\neq 0$, $\forall i=1,2,\ldots,n$
for the $\alpha$-dual $v$ of $\beta$ then the function $\phi$ is real analytic
in $(-1,1)$.
\end{lemma}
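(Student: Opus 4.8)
The plan is to follow verbatim the analytic--reparametrization scheme used in the proof of Lemma \ref{lem-4-6}(1), which established the real analyticity of $\phi$ on $[-1,1]$ in the simple case, and to supply the one ingredient that does not transfer automatically, namely the nonconstancy of the auxiliary function $s(g)$. Fix $s_0\in(-1,1)$. By restrictive CW-homogeneity and the present analog of Corollary \ref{cor-4-3}, I would first produce a KVFCL $(X,X')\in\mathcal{K}_{F;1}$ with $X\neq 0$ whose value $u=X-X'$ at $e$ realizes $s_0$, i.e.\ a unit vector with $\beta(u)/\alpha(u)=s_0$; the freedom in choosing $u$ on the level set $\{F=1,\ \beta/\alpha=s_0\}$ will be used below. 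Once such a KVFCL is fixed, (\ref{f-4-19}) holds, and real analyticity of $\phi$ near $s_0$ follows exactly as in (\ref{f-4-20})--(\ref{f-4-21}): one selects $Y\in\mathfrak{g}$ so that $f(t)=s(\exp(tY))$ has a first nonvanishing derivative of some order $k$ at a point where $s=s_0$, makes the analytic substitution $f=s_0\pm\tilde t^{\,k}$, and reads off from (\ref{f-4-19}) that $\phi(s_0\pm\tilde t^{\,k})=1/\alpha(\mathrm{Ad}(\exp(tY))X-X')$ is a real analytic function of $\tilde t$ whose derivatives vanish off the $k$-multiples, whence $\phi$ is real analytic on the relevant side of $s_0$. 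Both sides of the interior point $s_0$ are then covered, directly when $s_0$ lies in the interior of the range $\mathcal{I}_{(X,X')}$, and otherwise by the limiting argument of Lemma \ref{lem-4-6}(1) using the boundedness provided by Lemma \ref{lem-4-2}.

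The crux, and the only genuinely new step, is to show that $s(g)=\beta(\mathrm{Ad}(g)X-X')/\alpha(\mathrm{Ad}(g)X-X')$ is not constant. If it were, then (\ref{f-4-19}) would force $\alpha(\mathrm{Ad}(g)X-X')$, and hence $\beta(\mathrm{Ad}(g)X)=\langle v',\mathrm{Ad}(g)X\rangle_{\mathrm{bi}}$, to be constant in $g$. Writing $X=\sum_i X^{(i)}$ and $v'=\sum_i v'_i$ along $\mathfrak{g}=\oplus_i\mathfrak{g}_i$, and using that the factors are mutually $\langle\cdot,\cdot\rangle_{\mathrm{bi}}$-orthogonal and that the $G_i$ commute, constancy of $\beta(\mathrm{Ad}(g)X)$ under independent variation of the $g_i$ forces each $\langle v'_i,\mathrm{Ad}(g_i)X^{(i)}\rangle_{\mathrm{bi}}$ to be constant. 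Differentiating, this says exactly that the $\mathrm{Ad}(G)$-orbit of $X$ lies in an affine hyperplane $\{\beta=\mathrm{const}\}$, which happens precisely when $v'\in\bigcap_{g\in G}\mathfrak{c}_{\mathfrak{g}}(\mathrm{Ad}(g)X)=\bigoplus_{i:\,X^{(i)}=0}\mathfrak{g}_i$, the last equality coming from $\bigcap_{g_i}\mathfrak{c}_{\mathfrak{g}_i}(\mathrm{Ad}(g_i)X^{(i)})=\mathfrak{c}(\mathfrak{g}_i)=0$ whenever $X^{(i)}\neq 0$. Since the hypothesis $v_i\neq 0$ for all $i$ is equivalent to $v'_i\neq 0$ for all $i$ (that is, $\beta$ restricts nontrivially to every simple factor), and since $X\neq 0$ yields some $X^{(i)}\neq 0$, this inclusion cannot hold; thus $\beta(\mathrm{Ad}(g)X)$, and therefore $s(g)$, is nonconstant.

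Finally I would address the realizability step and, with it, the reason the statement is confined to the open interval. For $s_0\in(-1,1)$ the level set $\{F=1,\ \beta/\alpha=s_0\}$ is not contained in the proper subspace $\mathfrak{g}'\subset\mathfrak{g}$, so the realizing vector $u$ may be chosen with $u\notin\mathfrak{g}'$; since $\mathcal{K}_{F;2}=0\oplus\mathfrak{g}'$ accounts at $e$ only for vectors in $\mathfrak{g}'$, such a $u$ must extend to a KVFCL in $\mathcal{K}_{F;1}$, which by Lemma \ref{lem-4-2} has $X\neq 0$, as required. At the endpoints $s_0=\pm 1$ the realizer is forced parallel to $v\in\mathfrak{g}'$ and may be covered only by $\mathcal{K}_{F;2}$, where $X=0$ makes $s(g)$ constant and the whole scheme collapses; this is exactly why analyticity is asserted only on $(-1,1)$. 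I expect the main obstacle to be the interface between these two issues: guaranteeing, for every interior $s_0$, an extension lying in $\mathcal{K}_{F;1}$ rather than being absorbed into the right-translation part $\mathcal{K}_{F;2}$, while keeping $X$ with enough nonzero components to run the nonconstancy argument. Re-establishing the Corollary \ref{cor-4-3}/Lemma \ref{lem-4-2} dichotomy in the semisimple setting, and the equivalence $v_i\neq 0\Leftrightarrow v'_i\neq 0$, are the places where care is needed.
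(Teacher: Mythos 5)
Your proposal is correct and follows essentially the same route as the paper: realize each $s_0\in(-1,1)$ by a vector outside a codimension-$\ge 2$ subspace (the paper uses $\mathfrak{g}_0+\mathfrak{g}'$, where $\mathfrak{g}_0$ is the largest ideal of $\mathfrak{g}$ contained in $\mathrm{ker}\,\beta$), deduce that the resulting KVFCL has $X\neq 0$ with $s(g)$ nonconstant because the ideal generated by $[X,\mathfrak{g}]$ cannot lie in $\mathrm{ker}\,\beta$ when every $v_i\neq 0$, and then rerun the reparametrization argument of part (1) of Lemma \ref{lem-4-6}. Your factor-by-factor verification of the nonconstancy of $\beta(\mathrm{Ad}(g)X)$ is just an unpacking of that same ideal-theoretic step, so the two arguments coincide in substance.
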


\begin{proof}
Let $\mathfrak{g}_0\neq \mathfrak{g}$ be the largest ideal of $\mathfrak{g}$ contained by $\mathrm{ker}\beta$.
For any KVFCL given by $(X,X')\in\mathfrak{g}\oplus\mathfrak{g}'$, with $X\notin \mathfrak{g}_0$,
the range $\mathcal{I}_{(X,X')}$ of
\begin{equation}
s(g)=\beta(\mathrm{Ad}(g)X-X')/\alpha(\mathrm{Ad}(g)X-X'),\forall g\in G,
\end{equation}
is a closed interval with positive length. Otherwise, $s(g)$ is a constant function
of $g$, and then so is $\beta(\mathrm{Ad}(g)X-X')$. It implies the ideal generated by $[X,\mathfrak{g}]$ is contained in $\mathrm{ker}\beta$, which is a contradiction with that
$X\notin\mathfrak{g}_0$.

Consider the open subset
\begin{equation}
\mathcal{U}=\mathfrak{g}\backslash
(\mathfrak{g}_0+\mathfrak{g}')
\end{equation}
in $\mathfrak{g}$. By (\ref{f-5-43}), 
$\mathfrak{g}_0+\mathfrak{g}'$ has a codimensions bigger than 1 in $\mathfrak{g}$, so
$\mathcal{U}$ is a connected dense open subset of $\mathfrak{g}$.
For any $s_0\in (-1,1)$, we can find a tangent
vector $ u\in \mathcal{U}\subset T_e G$ such that $\beta( u)/\alpha( u)=s_0$.
Let $(X,X')$ be a KVFCL which value at $e$ is $ u$, then $X\neq 0$ and $X'\in \mathfrak{g}'$.
Because $X-X'= u\notin\mathfrak{g}_0+\mathfrak{g}'$, we have $X\notin\mathfrak{g}_0$, i.e. $\mathcal{I}_{(X,X')}$ is a
closed interval with a positive length.
Using only these KVFCLs, the proof can be carried out exactly as the one for (1) of Lemma \ref{lem-4-6}.
\end{proof}

The next lemma indicates the real analytic property of $\phi$ guarantees $\phi$ defines
a Randers norm in $\mathfrak{g}=T_e G$, and by the homogeneity of $(G,F)$, finishes
the mathematical induction. 

\begin{lemma}
Let $F$ be an $(\alpha,\beta)$-norm on $ V$, $\dim V=n>2$, which
is non-Riemannian. Assume $(\phi,\alpha,\beta)$
is a normalized datum defining $F$, in which $\phi\in C^\infty[-1,1]$ is real analytic
on $(-1,1)$, and there is a linear subspace $ V'\in V$,
$\dim  V'=m>1$, such that $ V'$ is not contained by $\mbox{ker}\beta$
and the restriction of $F$ in $ V'$ is Randers, then the norm $F$
on $ V$ is Randers.
\end{lemma}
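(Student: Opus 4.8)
The plan is to exploit the fact that passing from $F$ to its restriction $F|_{V'}$ does not change the defining function $\phi$; it only shrinks the range of its argument. The Randers shape forced on $\phi$ over a subinterval will then propagate to all of $(-1,1)$ by the identity theorem for real analytic functions. First I would record the local model. Writing $V=V_1\oplus V_2$ with $V_1=\mathrm{ker}\beta$ and $V_2=\mathbb{R}v$, every $y\in V$ satisfies $F(y)=\alpha(y)\phi(s)$ with $s=\beta(y)/\alpha(y)\in[-1,1]$. Restricting to $V'$, the data $\alpha|_{V'}$ and $\beta|_{V'}$ are again an inner product and a nonzero linear form (nonzero since $V'\not\subset\mathrm{ker}\beta$), and $F|_{V'}(y)=\alpha(y)\phi(\beta(y)/\alpha(y))$ is governed by the very same $\phi$. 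The only change is that $s$ now ranges over $[-\epsilon,\epsilon]$, where $\epsilon=\|\beta|_{V'}\|_{\alpha}>0$ is the $\alpha$-length of the $\alpha$-orthogonal projection of $v$ into $V'$; this is a nondegenerate interval.

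Next I would extract from the Randers hypothesis a polynomial identity for $\phi$ on $[-\epsilon,\epsilon]$. Since $F|_{V'}$ is Randers, it has a unique decomposition $F|_{V'}=A+B$ into a Riemannian norm $A$ and a $1$-form $B$, recovered from the reverse norm $y\mapsto F|_{V'}(-y)$ via $A(y)=\tfrac12(F|_{V'}(y)+F|_{V'}(-y))$ and $B(y)=\tfrac12(F|_{V'}(y)-F|_{V'}(-y))$. Now $F|_{V'}$ is invariant under the reflection $r$ of $V'$ that fixes the $\beta$-direction and negates its $\alpha$-orthocomplement, because $r$ is $\alpha$-orthogonal and preserves $\beta|_{V'}$; hence $A$ and $B$ inherit this invariance. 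An $r$-invariant $1$-form must be a multiple of $\beta$, so $B=k_3\beta|_{V'}$, and the $r$-invariant quadratic form $A^2$ has no cross term between the $\beta$-direction and its complement. Translating $(F|_{V'}-B)^2=A^2$ into the variable $s$ then gives the identity $(\phi(s)-k_3 s)^2=K_0+K_2 s^2$ for suitable constants $K_0,K_2$ and all $s\in[-\epsilon,\epsilon]$.

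Finally I would continue analytically. The function $h(s)=(\phi(s)-k_3 s)^2-K_0-K_2 s^2$ is real analytic on $(-1,1)$ and vanishes on the subinterval $(-\epsilon,\epsilon)$, so by the identity theorem $h\equiv 0$ on $(-1,1)$. Thus $\phi(s)-k_3 s=\pm\sqrt{K_0+K_2 s^2}$; using $\phi(0)=\phi(1)=1$ from the normalized datum one checks $K_0=1$ and that $K_0+K_2 s^2>0$ on $(-1,1)$, so the sign is constantly $+$, and by continuity $\phi(s)=k_3 s+\sqrt{K_0+K_2 s^2}$ on all of $[-1,1]$. Consequently $F=\alpha\phi(\beta/\alpha)=k_3\beta+\sqrt{K_0\alpha^2+K_2\beta^2}$, where the quadratic form $K_0\alpha^2+K_2\beta^2$ is positive definite because $F$ is a genuine smooth, strongly convex Minkowski norm. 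Hence $F$ is of Randers type.

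I expect the crux to be the extraction step: rigorously forcing the linear part of the Randers decomposition to be a multiple of $\beta$ and showing that $(\phi(s)-k_3 s)^2$ is an honest quadratic polynomial in $s$ on $[-\epsilon,\epsilon]$, since this is precisely what makes the analytic continuation applicable. The continuation itself, together with the endpoint normalization and the bookkeeping of the square-root sign and positive-definiteness, should then be routine.
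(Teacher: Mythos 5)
Your proposal is correct and follows essentially the same route as the paper: restrict to $V'$, observe that the same $\phi$ governs the restriction with $s$ ranging over a nondegenerate subinterval, force $\phi(s)=k_3s+\sqrt{K_0+K_2s^2}$ there, and propagate to all of $[-1,1]$ by real analyticity plus continuity. The only difference is that you spell out (via symmetrization and the reflection fixing the $\beta$-direction) the step the paper dismisses as ``direct calculation,'' which is a welcome addition rather than a divergence.
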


\begin{proof}
Let the restrictions of $F$, $\alpha$ and $\beta$ in $ V'$ be $\tilde{F}$,
$\tilde{\alpha}$ and $\tilde{\beta}$ respectively.
Because $\mbox{ker}\beta$ does not contain $ V'$, i.e. $\tilde{\beta}$
is not constantly 0 on $ V'$. Direct calculation indicates if
$\tilde{F}=\tilde{\alpha}\phi(\tilde{\beta}/\tilde{\alpha})$
is Randers, then $\phi(s)=\sqrt{k_1+k_2 s^2}+k_3 s$ around $s=0$, for some constants $k_1$, $k_2$ and $k_3$. Because
$\phi$ is real analytic on $(-1,1)$ and smooth on $[-1,1]$,
it must satisfy the same formula for all $s\in [-1,1]$, which implies $F$ is Randers.
\end{proof}


\begin{thebibliography}{99}

\bibitem[AW76]{AW76} R. Azencott, E. Wilson, Homogeneous manifolds with negative
curvature I, Trans. Amer. Math. Soc., 215 (1976), 323-362.

\bibitem[BCS00]{BCS00} D. Bao, S. S. Chern and Z. Shen, An introduction to
Riemann-Finsler Geometry, Springer-Verlag, New York, 2000.

\bibitem[BE88]{BE88} V. N. Berestovskii, Homogeneous manifolds with intrinsic
metric I, Siber. Math. Jour., 29 (1988), 887-897.

\bibitem[BN08-1]{BN08-1} V. N. Berestovskii and Yu. G. Nikonorov, Killing vector fields of
constant length on locally symmetric Riemannian manifolds, Transformation Groups,
13 (2008), 25-45.

\bibitem[BN08-2]{BN08-2} V. N. Berestovskii and Yu. G. Nikonorov, On $\delta$-homogeneous
Riemannian manifolds, Diff. Geom. Appl., 26 (2008), 514-535.

\bibitem[BN09]{BN09} V. N. Berestovskii and Yu. G. Nikonorov, Clifford-Wolf homogeneous
Riemannian manifolds, Jour. Differ. Geom., 82 (2009), 467-500.

\bibitem[BP99]{BP99} V. N. Berestovskii and C. Plaut, Homogeneous spaces of
curvature bounded below, Jour. Geom. Anal., 9 (1999), 203-219.

\bibitem[CS05]{CS05} S. S. Chern and Z. Shen, Riemann-Finsler Geometry, World
Scientific Publishers, 2004.

\bibitem[DH02]{DH02} S. Deng and Z. Hou, The group of isometries of a Finsler
space, Pacific J. Math., 207 (2002), 149-157.

\bibitem[DMW86]{DMW86} I. Dotti-Miatello, R. Miatello and J. A. Wolf, Bounded
isometries and homogeneous Riemannian quotient manifolds, Geom. Dedicata, 21
(1986), 21-27.

\bibitem[DX02]{DX1} S. Deng and M. Xu, Clifford-Wolf translations of Finsler spaces,
Forum Math., doi:10.1015/forum-2012-0032.

\bibitem[DX03-1]{DX2} S. Deng and M. Xu, Clifford-Wolf translations of homogeneous
Randers spheres, Israel J. Math., doi:10.1007/s11856-013-0037-4.

\bibitem[DX03-2]{DX3} S. Deng and M. Xu, Clifford-Wolf translations of left
invariant Randers metrics on compact Lie groups, Quart. J. Math.,
doi:10.1093/qmath/hat003.

\bibitem[DR83]{DR83} M. J. Druetta, Clifford translations in manifolds without
focal points, Geom. Dedicata, 14 (1983), 95-103.

\bibitem[FR63]{FR63} H. Freudenthal, Clifford-Wolf-isometrien symmetrischer ra\"{u}me,
Math. Ann., 150 (1963), 136-149.

\bibitem[HE74]{HE74} E. Heintze, On homogeneous manifolds of negative curvature, Math.
Ann., 211 (1974), 23-34.

\bibitem[OT76]{OT76} T. Ochiai and Takahashi, The group of isometries of a left
invariant metric on a Lie group, Math. Ann., 223 (1976), 91-96.

\bibitem[OZ69]{OZ69} V. Ozols, Critical points of the displacement function of an
isometry, J. Differential Geometry, 3 (1969), 411-432.

\bibitem[OZ74]{OZ74} V. Ozols, Clifford translations of symmetric spaces, Proc. Amer.
Math. Soc., 44 (1974), 169-175.

\bibitem[OZ77]{OZ77} H. Ozeki, On a transitive transformation group
of a compact group manifold,  Osaka J. Math., 14 (1977) 519-531.

\bibitem[RA41]{RA41} G. Randers, On an asymmetrical metric in the four-space of
general relativity, Phys. Rev., 59 (1941), 195-199.

\bibitem[SH01]{SH01} Z. Shen, Differential geometry of sprays and Finsler spaces,
Kluwer, Dordrent, 2001.

\bibitem[SH02]{SH02} Z. Shen, Finsler spaces with $K=0$ and $S=0$, Canadian J. Math.,
55 (2003), 112-132.

\bibitem[WH65]{WH65} H. Whitney, Local properties of analytic varieties,
Differential and Combinatorial Topology, Princeton Univ. Press, (1965), 205-244.

\bibitem[WO60]{WO60} J. A. Wolf, Sur la classification des varietes riemanniannes
homogenes a courbure constante, C. R. Math. Acad. Sci. Paris, 250 (1960),
3443-3445.

\bibitem[WO62]{WO62} J. A. Wolf, Locally symmetric homogeneous spaces, Commentarii
Mathematici Helvetici, 37 (1962/63), 65-101.

\bibitem[WO64]{WO64} J. A. Wolf, Homogeneity and bounded isometrics in manifolds
of negative curvature, Illinois J. Math., 8 (1964), 14-18.

\bibitem[WO10]{WO10} J. A. Wolf, Spaces of constant curvature, 6th ed., Surveys
and monographs, Amer. Math. Soc., 2010.

\bibitem[XD03]{XD1} M. Xu and S. Deng, Clifford-Wolf homogeneous Randers spaces,
J. of Lie Theory, 23 (2013) 837-845.
\end{thebibliography}
\end{document}